\documentclass[11pt]{amsart}
\usepackage{mathtools, amssymb, amsfonts, amsthm, tikz, bm, enumitem}
\usepackage{color, graphicx}
\usepackage{amsfonts}

\voffset=5mm
\oddsidemargin=17pt \evensidemargin=17pt
\headheight=9pt     \topmargin=-24pt%26pt
\textheight=624pt   \textwidth=433.8pt

\newtheorem{thm}{Theorem}[section]
\newtheorem{lemma}[thm]{Lemma}

\newtheorem{prop}[thm]{Proposition}

\newtheorem{defi}[thm]{Definition}
\newtheorem{eg}[thm]{Example}

\theoremstyle{plain}
\newtheorem{theo}[thm]{Theorem}

\theoremstyle{definition}

\newtheorem{rem}[thm]{Remark}

\numberwithin{equation}{section}

%%%%%%%%%%%%%%%%%%%%%%%%%%%%%%
%%%%%%%%%%%%%%%%%%%%%%%%%%%%%%
%%%%%%%%%%%%%%%%%%%%%%%%%%%%%%

\def\sq{\square}

\def\zz{\mathbb Z}
\def\nn{\mathbb N}

\def\rr{\mathbb R}

\def\ov{\overline}

\def\la{\lambda}

\def\al{\alpha}
\def\be{\beta}

\def\T{\mathbf{T}}

\def\wt{\widetilde}
\def\<{\langle}
\def\>{\rangle}

\def\Z{ {\text {\rm Z} } }

\def\U{{\text {\rm U} } }

\def\0{{\mathbf 0}}

\def\.{\hskip.06cm}
\def\ts{\hskip.03cm}

\def\bx{{\textbf{x}}}
\def\bt{{\textbf{t}}}
\def\by{{\textbf{y}}}

\def\Z{\mathbb{Z}}
\def\R{\mathbb{R}}
\def\N{\mathbb{N}}
\def\QQ{\mathbb{Q}}

\def\T{\mathcal{T}}

\def\LS{\mathcal{S}}

\def\W{\mathcal{W}}
\def\U{\mathcal{U}}

\newcommand{\ex}{\exists\ts}
\renewcommand{\for}{\forall\ts}

\newcommand{\cj}[1]{\overline{#1}}
\newcommand{\n}{\cj{n}}
\renewcommand{\b}{\cj{b}}

\def\a{\cj{a}}
\def\c{\cj{c}}
\def\d{\cj{d}}

\newcommand{\x}{\mathbf{x}}
\renewcommand{\t}{\mathbf{t}}

\def\v{\cj v}

\newcommand{\y}{\mathbf{y}}
\newcommand{\z}{\mathbf{z}}

\newcommand{\floor}[1]{\lfloor#1\rfloor}

\def\polyin{\textup{poly}}

\def\nin{\noindent}

\def\proj{\textup{proj}}

\renewcommand\L{\mathcal{L}}

\def\NP{{\textup{\textsf{NP}}}}

\def\sg{\textup{Sg}}

\def\proj{\textup{proj}}
\def\iproj{\textup{proj}^{-1}}

\def\affdim{\textup{dim}}
\def\rank{\textup{rank}}

\def\cpl{\backslash}
\def\Pr{\textup{PA}}
\def\span{\textup{span}}
\def\wh{\widehat}
\def\w{\cj{w}}
\def\pat{\bm}

\def\phi{\varphi}
%end of macros

\title{Enumerating projections of integer points in unbounded polyhedra}

\author[Danny Nguyen \and Igor Pak]{Danny Nguyen$^{\star}$ \and Igor~Pak$^{\star}$}

%% \thanks{
%%   Earlier proceeding version: Danny Nguyen and Igor Pak, Enumeration of integer points in projections of unbounded polyhedra,
%%   in \emph{Proceedings of the 19th International Conference on Integer Programming and Optimization},
%%   Lecture Notes in Comput. Sci. {\bf 10328}, 417--429, 2017.
%% }

\thanks{\thinspace ${\hspace{-.45ex}}^\star$Department of Mathematics,
UCLA, Los Angeles, CA, 90095.
\hskip.06cm
Email:
\hskip.06cm
\texttt{\{ldnguyen,\ts{pak}\}@math.ucla.edu}}

\thanks{
\today}

\begin{document}

\maketitle

\begin{abstract}
We extend the \emph{Barvinok--Woods algorithm} for enumerating projections of integer points in polytopes to unbounded polyhedra.  For this, we obtain a new
structural result on projections of \emph{semilinear subsets} of the integer lattice.
We extend the results to general formulas in \emph{Presburger Arithmetic}. We also
give an application to the \emph{$k$-Frobenius problem}.
\end{abstract}

\section{Introduction}\label{sec:Introduction}

\subsection{The results}
\emph{Integer linear programming} in fixed dimension is a classical
subject~\cite{L}.  The pioneering result by Lenstra~\cite{L} shows that the
\emph{feasibility} of integer linear programming in a fixed dimension $n$ can
be decided in polynomial time:
$$(\circ) \qquad
\exists \ts\bx \in \zz^{n} \, : \, A\ts\bx \, \le \, \ov b\ts.
$$
Here $A \in \Z^{d \times n}$ and $\b \in \Z^{d}$ are the input, and Lenstra's algorithm runs in polynomial time compared to their total bit length.
This result was extended by Kannan~\cite{K1}, who showed that
\emph{parametric integer linear programming} in fixed dimensions can
be decided in polynomial time:
$$(\circ\circ) \qquad
\forall \ts\by \in P\cap\zz^{n}  \; \; \exists \ts\bx \in \zz^k \, : \, A\ts\bx \. + \. B\ts\by \, \le \, \ov b\ts.
$$
Both results rely on difficult results in geometry of numbers and can be viewed
geometrically: $(\circ)$ asks whether a polyhedron $\ts Q = \{A\x \le \b\ts\} \subseteq \rr^n$ \ts
has an integer point. Similarly, $(\circ\circ)$ asks whether every integer point in the polyhedron $P = \{C\x \le \d \ts\} \subseteq \R^{n}$ is the projection of an integer point in the polyhedron $\ts Q = \{A\x +B\y \le \b\ts\} \subseteq \rr^{m}$, where $m=n+k$.

Barvinok~\cite{B1} famously showed that the number of integer points in polytopes
in a fixed dimension $n$ can be computed in polynomial time. He used a technology of
\emph{short generating functions} (GFs) to enumerate the integer points in general
(possibly unbounded) rational polyhedra in $\rr^d$ in the following form:
$$(\divideontimes) \qquad f(\bt) \, = \,
\sum_{i=1}^N \, \frac{c_{i} \. \bt^{\a_i}}{(1-\bt^{\b_{i\ts 1}})\cdots (1-\bt^{\b_{i\ts k_{i}}})}\ts,
$$
where $c_{i} \in \mathbb{Q}, \; \cj a_{i}, \cj b_{ij} \in \zz^{n}$ and $\ts\bt^{\a} = t_1^{a_{1}}\cdots {}\ts t_n^{a_{n}}\ts$ if $\ts\a = (a_{1},\ldots,a_{n}) \in \zz^{n}$.
Under the substitution $\t \gets 1$ in $(\divideontimes)$, one can count the number of integer points in a (bounded) polytope $Q$, and thus solves $(\circ)$ quantitatively for the bounded case.
In general, one can also succinctly represent integer points in the intersections, unions and complements of general (possibly unbounded) rational polyhedra~\cite{B3,BP} in $\R^{n}$ using short GFs.

Barvinok's algorithm was extended to count projections of integer points in bounded polytopes by Barvinok
and Woods \cite{BW}, see Theorem~\ref{th:BW}.  The result has a major technical drawback:
while it does generalize Kannan's result for bounded $P$ and $Q$ as in $(\circ\circ)$,
it does not apply for unbounded polyhedra.  The main result of this paper is
an extension of the Barvinok--Woods algorithm to the unbounded case (Theorem~\ref{th:main_2}).

\begin{theo}\label{th:main_2}
Let $m, n \in \nn$ be fixed dimensions.
Given a $($possibly unbounded$\ts)$ polyhedron $Q = \{\x \in \R^{m} : A\x \le \b \}$
%%and a matrix $T \in \zz^{n \times m}$ representing a linear transformation $T : \Z^{m} \to \Z^{n}$,
and an integer linear transformation $T : \rr^{m} \to \rr^{n}$
%%represented as a matrix $T \in \zz^{n \times m}$
%%represented as a matrix $T \in \zz^{n \times m}$
which satisfies $T(Q) \subseteq \rr_{+}^{n}$,
let $g(\t)$ be the GF for $T(Q \cap \Z^{m})$, i.e.,
$$
g(\t) \, = \, \sum_{\y \; \in \; T(Q \cap \, \Z^{m})} \. \t^{\y}\..
$$
Then there is a polynomial time algorithm to compute $g(\t)$ in the form of
a short GF~$(\divideontimes)$.
\end{theo}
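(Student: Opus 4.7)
The plan is to reduce Theorem~\ref{th:main_2} to the bounded Barvinok--Woods theorem by decomposing $Q \cap \Z^m$ into a polynomial-size family of ``elementary'' unbounded pieces, projecting each piece under $T$ via a new structural lemma, and assembling the pieces using Barvinok's Boolean calculus on short GFs.

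The first step is a semilinear decomposition. Write $Q = Q_0 + C$, where $Q_0$ is a rational polytope and $C$ is the recession cone of $Q$. Applying Barvinok's signed unimodular decomposition of $C$ together with an enumeration of $Q_0 \cap \Z^m$, one obtains an identity of indicator functions
$$
[\ts Q \cap \Z^m\ts] \, = \, \sum_i \ep_i \, \bigl[\ts u_i \. + \. \Z_{\ge 0}\ts g_{i,1} \. + \cdots + \. \Z_{\ge 0}\ts g_{i,s_i}\ts\bigr], \qquad \ep_i \in \{\pm 1\},
$$
with polynomially many terms, each a translated unimodular cone $\cL_i$ of polynomial bit-size; its indicator has the trivial short GF $\t^{u_i}/\prod_j(1 - \t^{g_{i,j}})$ in $m$ variables.

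The core step, and the main obstacle, is to compute a short GF in the image $n$ variables for the \emph{set} $T(\cL_i) \subseteq \Z_{\ge 0}^n$. When $T$ is injective on $\span(g_{i,1},\ldots,g_{i,s_i})$, the answer is immediate: $\t^{T(u_i)}/\prod_j(1 - \t^{T(g_{i,j})})$. The difficulty arises when the linear map $\la \mapsto \sum_j \la_j T(g_{i,j})$ has a nontrivial kernel inside $\Z_{\ge 0}^{s_i}$: distinct $\la$ vectors hit the same image point, the multiplicities are typically infinite, and naive substitution into the $m$-variable GF produces a divergent or ill-defined ``multiplicity'' series rather than the GF of the set. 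The structural lemma I aim to prove states that $T(\cL_i)$ nevertheless admits a polynomial-size signed decomposition into translated unimodular cones in $\Z^n$, computable in polynomial time. My strategy is to split $\la$ along $(\ker T) \cap \span(g_{i,1},\ldots,g_{i,s_i})$ versus a rational complement, reducing the enumeration to (i) a bounded Barvinok--Woods instance over a fundamental domain of the complement, and (ii) a single unimodular cone in the image lattice.

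Finally, with each $[T(\cL_i)]$ in hand as a short GF in $\t = (t_1,\ldots,t_n)$, the union $T(Q \cap \Z^m) = \bigcup_i T(\cL_i)$ is computed using Barvinok's Boolean calculus (union and Hadamard-style intersection) on short GFs, which preserves polynomial-time computability in fixed dimension. The hypothesis $T(Q) \subseteq \R_+^n$ guarantees that every monomial in the final short GF has nonnegative exponent, so $g(\t)$ is a well-defined formal power series. The crux of the argument is the structural lemma in the third paragraph, which isolates the genuinely new ingredient needed to go beyond the bounded Barvinok--Woods setting.
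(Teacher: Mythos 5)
There is a genuine gap, in fact three linked ones, and they sit exactly where the paper has to do its real work. First, your opening reduction is unsound as stated: Barvinok's signed unimodular decomposition is an identity of \emph{indicator functions}, $[\ts Q\cap\Z^m\ts]=\sum_i\ep_i[\cL_i]$, in which the cones $\cL_i$ are generally not subsets of $Q$; points whose net multiplicity cancels to zero can still lie in some $\cL_i$ with $\ep_i=+1$, so $\bigcup_i T(\cL_i)$ can strictly contain $T(Q\cap\Z^m)$. Projection of a set does not commute with signed identities, and you cannot repair this by insisting on an exact (unsigned) partition into unimodular cones, since such partitions can require exponentially many pieces --- that is the very reason Barvinok's decomposition carries signs. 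Second, even granting short GFs for each set $T(\cL_i)$, your assembly step fails: the Barvinok--Woods Boolean calculus (Hadamard products, unions via inclusion--exclusion) runs in polynomial time only for a \emph{fixed} number of operands; iterating it over polynomially many pieces blows up, because each Hadamard product adds the numbers of binomial factors in the denominators and the cost of subsequent operations is exponential in that parameter. This is precisely why Theorem~\ref{th:main_1} is engineered to produce a \emph{disjoint} decomposition $T(X)=\bigsqcup_j R_j\cap\pat T_j$: the paper then only ever \emph{adds} GFs of disjoint pieces, and never takes a union or intersection of many short GFs.

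Third, your ``core structural lemma'' --- a polynomial-time computable description of $T(\cL_i)$ for a translated unimodular cone $\cL_i$ when $T$ is not injective on its span --- is asserted rather than proved, and the sketch offered is essentially circular: splitting $\la$ along $(\ker T)\cap\span(g_{i,1},\dots,g_{i,s_i})$ and projecting the orthant onto a rational complement along the kernel is again a projection of the lattice points of an \emph{unbounded} polyhedron, i.e.\ the problem you set out to solve; and the claimed outcome ``bounded Barvinok--Woods instance plus a single unimodular cone in the image lattice'' is too simple even qualitatively. Already for the semigroup generated by $2$ and $5$ (the paper's example) the image is a bounded exceptional set plus a patterned ray, and in higher dimensions the image of a single cone decomposes into many pieces: the regions of $T(Q)$ with short fibers carry genuinely different, lower-rank periods, which is why the paper's Lemma~\ref{lem:proj} must split $R$ according to fiber length $\ge\ell$ versus $<\ell$ (Fourier--Motzkin, the lattice parameter $\ell$, and the parallelogram argument), and why Theorem~\ref{th:main_1} only promises $\eta(X)^{O(m!)}$ patterned pieces, not one. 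The paper's actual proof of Theorem~\ref{th:main_2} then handles each disjoint piece $R_j\cap\pat T_j$ by writing $R_j$ as copolytope-plus-cone, periodizing the cone by a full-rank sublattice of $\T_j$ so the GF factors as a bounded patterned part times $\prod_t(1-\t^{\w^t})^{-1}$, and computing the bounded part by a single call to Theorem~\ref{th:BW} on $Q$ intersected with a box of polynomial bit-size obtained from standard integer-programming bounds. So your plan correctly isolates the non-injective cone projection as the crux, but the lemma you would need there is, in essence, Theorem~\ref{th:main_1} itself, and the surrounding reduction and reassembly steps would also have to be replaced before the argument could go through.
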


Here by an integer linear transformation we mean that the linear map $T$ is presented by a matrix $T \in \zz^{n \times m}$.
To illustrate our theorem, consider:

\begin{eg}
{\rm
Let $Q = \{(x,y,z) \in \R_{+}^{3} : x = 2 y + 5 z\}$ and $T$ be the projection from $\zz^{3}$ onto the first coordinate $\zz^{1}$. Then $T(Q \cap \zz^{3})$ has a short GF:
\begin{equation*}
\;\frac{1}{(1-t^{2})(1-t^{5})} - \frac{t^{10}}{(1-t^{2})(1-t^{5})} = 1 + t^{2} + t^{4} + t^{5} + t^{7} + \ldots\.
\end{equation*}
}
\end{eg}

Our main tool is a structural result describing projections of \emph{semilinear sets},
which are defined as disjoint union of intersections of polyhedra and lattice cosets.
More precisely, we prove that such projections are also semilinear and give bound on
(combinatorial) complexity of the projections (Theorem~\ref{th:main_1}). In combination
with the Barvinok--Woods theorem this gives the extension to unbounded polyhedra.

We then present a far-reaching generalization of our results to all formulas in
\emph{Presburger Arithmetic}: we first prove a structural result
(Theorem~\ref{th:pres}) and then a generalization of Theorem~\ref{th:main_2}
(Theorem~\ref{th:pres_GF}).  We illustrate the power of our generalization
in the case of the \emph{$k$-Frobenius Problem}.

\subsection{Connections and applications}
After Lenstra's algorithm, many other methods for fast integer programming in fixed dimensions have been found (see~\cite{E1,FT}).
Kannan's algorithm was strengthened in~\cite{ES}.
%% by removing the condition that $P$ has a bounded affine dimension.
Barvinok's algorithm has been simplified and improved in~\cite{DK,KV}.
Both Barvinok's and Barvinok--Woods' algorithms have been
implemented and used for practical computation \cite{DHTY,Kop,VSBLB}.

Let us emphasize that in the context of parametric integer programming,
there are two main reasons to study unbounded polyhedra:

\medskip

\nin
{\tt {\small (1)}} \ts  Working with short GFs of integer points
in unbounded polyhedra allows to compute to various integral
sums and valuations over convex polyhedra.  We refer to
\cite{B+,B3,BV} for many examples and further references.

\smallskip

\nin
{\tt {\small (2)}} \ts  For a fixed unbounded polyhedron $Q \subseteq \R^{m}$ and a varying polytope $P \subset \R^{n}$ in
$(\circ\circ)$, one can count the number of points in the projection of~$Q \cap \Z^{m}$ within~$P$.
This is done by intersecting~$Q$ with a box of growing size and then projecting it.
The Barvinok--Woods algorithm is called multiple times for different boxes, which depend on $P$.
Our approach allows one to call the Barvinok--Woods algorithm only once to project~$Q \cap \Z^{m}$
(unbounded), and then call a more economical Barvinok's algorithm to compute
the intersection with~$P$.  See Section~\ref{s:k-feas} for an explicit example.

In conclusion, let us mention that semilinear sets are well studied subjects in both computer science and logic.
The fact that the category of semilinear sets are closed under taking projections is not new.
Ginsburg and Spanier \cite{GS} showed that semilinear sets are exactly those sets definable in Presburger arithmetic, which are closed under Boolean operations and projections.
Woods \cite{W15} also characterized semilinear sets as exactly those sets with rational generating functions, which also implies closedness under Boolean operations and projections.
In our paper, we prove the structural result on projections
of semilinear sets by a direct argument, without using tools from logic (e.g. quantifier elimination).
By doing so, we obtain effective polynomial
bounds for the number of polyhedral pieces and the
facet complexity of each piece in the projection.
%% We refer to~\cite{Ginsburg} for background on semilinear sets, and to~\cite{CH} for most recent developments.
%% The study of semilinear sets has numerous applications in computer science,
%% such as analysis of \emph{number decision diagrams} (see \cite{Leroux1}),
%% and \emph{context-free languages} (see \cite{Parikh}).

\bigskip

\section{Standard definitions and notations}\label{sec:Notations}

\nin
We use $\nn \ts = \ts \{0,1,2,\ldots\}$, $\Z_{+} = \{1,2,\ldots\}$ and $\R_{+} = \{x \in \R : x \ge 0\}$.

\nin
All constant vectors are denoted $\a, \b, \c, \d, \n, \v,$ etc.

\nin
Integer matrices are denoted $A, B, C$, etc.

\nin
Variables are denoted $x,y,z$, etc.; vectors of variables are denoted $\x, \y, \z$, etc.

\nin
We write $\x \le \y$ if $x_{j} \le y_{j}$ for coordinate in vectors $\x$ and $\y$.

\nin
We also write $\x \le N$ to mean that each coordinate is~$\le N$.

%%If $x_{j} \le c$ for every index $j$ with $c$ a constant, we write $\x \le c$.

%%We use $\floor{.}$ to denote the floor function.

%%The the vector $\y$ with coordinates $y_{i} = \floor{x_{i}}$ is denoted by $\y = \floor{\x}$.

%%Single-variable generating functions are denoted by $f(t), g(u), h(v)$, etc.

\nin
GF is an abbreviation for ``\emph{generating function}.''

\nin
Multivariate GFs are denoted by $f(\t), g(\t), h(\t)$, etc.

\nin
A \emph{polyhedron} is an intersection of finitely many closed half-spaces in~$\rr^n$.

%% \emph{Copolyhedron/copolytope} is a polyhedron/polytope with possibly some open

%% facets.

\nin
A \emph{polytope} is a bounded polyhedron.

\nin
Polyhedra and polytopes are denoted by $P, Q, R$, etc.

\nin
The \emph{affine dimension} of $P$ is denoted by $\affdim(P)$.

\nin
Integer lattices are denoted by $\L,\T,\U,\W$, etc.

\nin
Let $\rank(\L)$ denotes the \emph{rank} of lattice~$\L$.

\nin
\emph{Patterns} are denoted by $\pat L, \pat T, \pat S, \pat U, \pat W$, etc.

\nin
Let $\phi(\cdot)$ denotes the \emph{binary length} of a number, vector, matrix, GF, or a logical formula.

\nin
For a polyhedron $Q$ described by a linear system $A\x \le \b$, let $\phi(Q)$ denote
the total length
$\phi(A)+\phi(\b)$.

\nin
For a lattice $\L$ generated by a matrix $A$, we use $\phi(\L)$ to denote $\phi(A)$.

\bigskip

\section{Structure of a projection}\label{sec:Structure}

\subsection{Semilinear sets and their projections}

In this section, we assume all dimensions $m,n$, etc., are fixed. We emphasize that all lattices mentioned are of full rank. All inputs are in binary.

\begin{defi}\label{def:proj}
{\rm
Given a set $X \subseteq \R^{n+1}$, the \emph{projection} of $X$ onto $\R^{n}$, denoted by $\proj(X)$, is defined as
\[
\proj(X) \coloneqq \{(x_{2},\dots,x_{n}) : (x_{1},x_{2},\dots,x_{n+1}) \in X\} \subseteq \R^{n}.
\]
%% The \emph{integer projection} of $Q$, denoted by $\proj(Q)$, is defined as:
%% \[
%% \proj(Q) \coloneqq \{(x_{2},\dots,x_{n}) : (x_{1},x_{2}\dots,x_{n+1}) \in Q \cap \Z^{n+1}\} \subseteq \Z^{n}.
%% \]
For any $\y \in \proj(X)$, denote by $\iproj(\y) \subseteq X$ the preimage of $\y$ in $X$.
%%Similarly, $\iprojZ(\y)$ denotes the \emph{integer preimage} of $\y$.
}
\end{defi}

\begin{defi}\label{def:pattern}
{\rm
Let $\L \subseteq \Z^{n}$ be a full-rank lattice.
A \emph{pattern $\pat L$ with period $\L$} is a union of finitely many (integer) cosets of $\L$.
For any other lattice $\L'$, if $\pat L$ can be expressed as a finite union of cosets of $\L'$, then we also call $\L'$ a period of $\pat L$.

Given a rational polyhedron $Q$ and a pattern $\pat L$, the set $Q \cap \pat L$ is called a \emph{patterned polyhedron}. When the pattern $\pat L$ is not emphasized, we simply call $Q$ a \emph{patterned polyhedron with period $\L$}.
}
\end{defi}

\begin{defi}\label{def:sem_lin}
{\rm
A \emph{semilinear} set $X$ is a set of the form
\begin{equation}\label{eq:sem_lin}
X = \bigsqcup_{i=1}^{k} \, Q_{i} \cap \pat L_{i}\.,
\end{equation}
where each $Q_{i} \cap \pat L_{i}$ is a patterned polyhedron with period $\L_{i}$,
and the polyhedra $Q_{i}$ are pairwise disjoint.
The \emph{period length} $\psi(X)$ of $X$ is defined as
\begin{equation*}
\psi(X) = \sum_{i=1}^{k} \phi(Q_{i}) + \phi(\L_{i}).
\end{equation*}
Note that $\psi(X)$ does not depend on the number of cosets in each $\pat L_{i}$.
Define
\[
\eta(X) \coloneqq \sum_{i=1}^{k} \eta(Q_{i}),
\]
where each $\eta(Q_{i})$ is the number of facets of the polyhedron $Q_{i}$.
}
\end{defi}

\begin{rem}\label{rem:semilinear}
In Theoretical~CS
literature, semilinear sets are often explicitly presented as a finite union of \emph{linear sets}.
Each linear set is a translated semigroup generated by a finite set of vectors in $\zz^{n}$.
This explicit representation by generators makes operations like projections easy to compute, while structural
properties harder to establish (see e.g.~\cite{CH} and the references therein).
The equivalence of the two representations is proved in~\cite{GS}.
\end{rem}

Our main structural result is the following theorem.

\begin{theo}\label{th:main_1}
%% Given a patterned polyhedron $(Q \cap \pat L) \subseteq \R^{m}$ with period $\L \subseteq \Z^{m}$, and a linear map $T: \R^{m} \to \R^{n}$ satisfying $T(\Z^{m}) \subseteq \Z^{n}$, we can decompose $T(Q \cap \pat L)$ as
Let $m \in \nn$ be fixed.
Let $X \subseteq \Z^{m}$ be a semilinear set of the form \eqref{eq:sem_lin}. Let $T: \R^{m} \to \R^{n}$ be a linear map satisfying $T(\Z^{m}) \subseteq \Z^{n}$.
Then $T(X)$ is also a semilinear set, and there exists a decomposition
\begin{equation}\label{eq:decomp_many}
T(X) \, = \, \bigsqcup_{j=1}^{r} \, R_{j} \cap \pat T_{j} \.,
\end{equation}
where each $R_{j} \cap \pat T_{j}$ is a patterned polyhedron in $\R^{n}$ with period $\T_{j} \subseteq \Z^{n}$. The polyhedra $R_{j}$ and lattices $\T_{j}$ can be found in time $\polyin(\psi(X))$.
Moreover,
\[
r = \eta(X)^{O(m!)} \quad \text{and} \quad \eta(R_{j}) = \eta(X)^{O(m!)}, \; 1 \le j \le r.
\]
%%Here $c$ is a universal constant not depending on $m$.
\end{theo}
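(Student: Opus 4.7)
The strategy is to reduce the general integer linear map $T : \R^{m} \to \R^{n}$ to a sequence of elementary moves and iterate. Via the Smith normal form of the matrix of $T$, one decomposes $T$ as the composition of integer-unimodular changes of basis in the source and target (which do not alter the semilinear structure beyond relabeling lattice generators and polyhedral inequalities) and a standard coordinate projection $\R^{m} \to \R^{n}$, which is itself a composition of $m-n$ elementary projections $\R^{k} \to \R^{k-1}$ eliminating one coordinate. It thus suffices to prove the theorem when $T$ is the projection onto the first $m-1$ coordinates and then iterate. By linearity, we may project each piece $Q_{i} \cap \pat L_{i}$ separately; the final pairwise disjointness required by Definition~\ref{def:sem_lin} is restored by a standard inclusion--exclusion step that multiplies the number of pieces by a polynomial factor.

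\textbf{One-step projection of a single patterned polyhedron.} Fix $Q = \{\x \in \R^{m} : A\x \le \b\}$ and a pattern $\pat L$ with period $\L$. Partition the rows of $A$ by the sign of their $x_{m}$-coefficient; for $\y \in \R^{m-1}$, the fiber $\{x_{m} : (\y, x_{m}) \in Q\}$ is the interval $[a(\y), b(\y)]$, where $a(\y) = \max_{i} a_{i}(\y)$ and $b(\y) = \min_{j} b_{j}(\y)$ come respectively from the rows with positive and negative $x_{m}$-coefficient. Refine $\proj(Q)$ according to which pair $(a_{i}, b_{j})$ realizes the max--min; this yields $O(\eta(Q)^{2})$ polyhedral cells with $O(\eta(Q))$ facets each, on which $a$ and $b$ become single affine functions. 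Writing $\pat L$ as a finite union of $\L$-cosets and letting $d$ be the positive generator of the one-dimensional sublattice $\L \cap (\{\0\} \times \R)$, the task reduces within each cell to describing, for every coset $c + d\Z$, the set of $\y$ for which $[a_{i}(\y), b_{j}(\y)]$ meets $c + d\Z$ and the corresponding representative lies in the appropriate coset of $\proj(\L)$.

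\textbf{Main obstacle and complexity.} The crux is to express the condition $[a_{i}(\y), b_{j}(\y)] \cap (c + d\Z) \neq \emptyset$ as a bounded union of patterned polyhedra. The key dichotomy is: if $b_{j}(\y) - a_{i}(\y) \ge d$, the condition holds automatically, giving a single half-space and a patterned polyhedron with period $\proj(\L)$; otherwise the condition becomes a modular constraint on $a_{i}(\y)$ and $b_{j}(\y)$ modulo $d$. After clearing a common denominator $N$ of $a_{i}$ and $b_{j}$, these modular conditions cut out patterned polyhedra with period a sublattice of finite index in $\proj(\L)$ controlled by $Nd$. The number of residue classes to enumerate is $\polyin(\psi(X))$, giving polynomial-time computability and a polynomial blow-up at each step. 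Iterating this elementary projection $O(m)$ times and tracking the Fourier--Motzkin-style growth of facet counts at each step yields the claimed $r, \eta(R_{j}) = \eta(X)^{O(m!)}$ bounds.
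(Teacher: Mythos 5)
Your overall architecture (reduce to one-coordinate eliminations, split each cell by which pair of Fourier--Motzkin rows realizes the fiber interval, and use the dichotomy ``fiber length $\ge d$'' versus ``fiber length $<d$'') matches the paper's proof of Lemma~\ref{lem:proj}, and the thick-fiber case is handled the same way (period $\proj(\L)$, as in Lemma~\ref{lem:big_fiber}). The genuine gap is in your treatment of the thin-fiber case. You propose to write $\pat L$ as a union of $\L$-cosets and, within each cell, to enumerate residue classes modulo $Nd$, claiming this enumeration is $\polyin(\psi(X))$. That claim fails for two reasons. First, $d$ (and $N$) are part of the binary input, so the number of residues modulo $Nd$ is exponential in $\phi(\L)+\phi(Q)\le\psi(X)$, not polynomial. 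Second, and more fundamentally, the number of $\L$-cosets constituting $\pat L$ is not bounded by $\psi(X)$ at all (Definition~\ref{def:sem_lin} explicitly excludes it), and in the iterated application the patterns produced by earlier elimination steps are never explicitly available --- the theorem only ever outputs polyhedra and periods, and the paper points out (Remark~\ref{rem:NP-hard}) that explicitly determining the patterns is $\NP$-hard. So any construction of the pieces $R_j,\T_j$ that routes through a coset-by-coset or residue-by-residue description cannot be carried out in time $\polyin(\psi(X))$, nor iterated over the remaining coordinates.

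The paper avoids this by never touching the pattern in the thin case: it shows the projected set restricted to a thin cell is invariant under a lattice built from $\L$ and the affine fiber maps alone --- the linear parts $\wt\al,\wt\be$ agree on the span of the recession cone, one takes $\T=\proj(\L\cap G)$ with $G=\wt\al(\span D)$, verifies invariance by a parallelogram argument, and extends $\T$ to full rank by a sparse complementary lattice --- so only the period is computed and the pattern stays implicit. Your modular idea can likely be repaired in this spirit: the condition ``$[a_i(\y),b_j(\y)]$ meets the vertical section of $\pat L$ over $\y$'' is invariant under all $\w\in\proj(\L)$ for which the linear parts of $a_i$, $b_j$, and of a linear section of $\proj|_{\L}$ are congruent modulo $d$ (after clearing denominators), and these congruences define a full-rank, finite-index sublattice of $\proj(\L)$ computable in polynomial time \emph{without} enumerating residues or cosets; note also that, as stated, your congruence on $a_i(\y),b_j(\y)$ alone is not sufficient when $\L$ does not split as $\proj(\L)\oplus d\Z$, since the vertical coset representative over $\y$ itself shifts with $\y$. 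Two smaller points: the Smith normal form of $T$ has elementary divisors, so $T$ is a coordinate projection composed with unimodular maps \emph{and} a diagonal scaling (harmless, but your statement skips it, and you should also handle $\rank(T)<n$); and ``inclusion--exclusion'' cannot produce a disjoint set decomposition --- you need the common refinement of the overlapping pieces together with intersections of the corresponding periods, as in the paper.
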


\begin{rem}{\rm
The above result describes all pieces $R_{j}$ and periods $\T_{j}$ in polynomial time. However, it does not explicitly describe the patterns $\pat T_{j}$.
The latter is actually an $\NP$-hard problem (see Remark~\ref{rem:NP-hard}).
}
\end{rem}

\begin{rem}{\rm
In the special case when $X$ is just one polyhedron $Q \cap \Z^{m}$, the first piece $R_{1} \cap \pat T_{1}$ in \eqref{eq:decomp_many} has a simple structure.
Theorem~1.7 in \cite{AOW} identifies and describes  $R_{1} \cap \pat T_{1}$ as $R_{1} = T(Q)_{\gamma}$ and $\pat T_{1} = T(\Z^{m})$.
Here $T(Q)_{\gamma}$ is the \emph{$\gamma$-inscribed} polyhedron inside $T(Q)$ (see \cite[Def.~1.6]{AOW}).
However, their result does not characterize the remaining pieces $R_{j} \cap \pat T_{j}$ in the projection $T(X)$.
Thus, Theorem~\ref{th:main_1} can also be seen as a generalization of the result in \cite{AOW} to semilinear sets, with a complete description of the projection.
}
\end{rem}

For the proof of Theorem~\ref{th:main_1}, we need a technical lemma:

\begin{lemma}\label{lem:proj}
Let $n \in \nn$ be fixed.
Consider a patterned polyhedron $(Q \cap \pat L) \subseteq \R^{n+1}$ with period $\L$. There exists a decomposition
\begin{equation}\label{eq:decomp}
\proj(Q \cap \pat L) = \bigsqcup_{j=0}^{r} \, R_{j} \cap \pat T_{j} \.,
\end{equation}
where each $R_{j} \cap \pat T_{j}$ is a patterned polyhedron in $\R^{n}$ with period $\T_{j} \subseteq \Z^{n}$. The polyhedra $R_{j}$ and lattices $\T_{j}$ can be found in time $\polyin(\phi(Q) + \phi(\L))$. Moreover,
\[
r \ts = \ts O\bigl(\eta(Q)^{2}\bigr) \quad \text{and} \quad \eta(R_{j}) \ts = \ts O\bigl(\eta(Q)^2\bigr), \quad \text{for all} \ \ 0 \le j \le r\ts.
\]
\end{lemma}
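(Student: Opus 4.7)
The plan is to combine Fourier--Motzkin elimination on the polyhedron $Q$ with a coset-wise analysis of the lattice lifts. Since $\L \subseteq \Z^{n+1}$ has full rank, a rank count forces $\L \cap \Z \bbe_1 = d\Z \bbe_1$ for some integer $d \ge 1$ and $\L' := \proj(\L) \subseteq \Z^n$ to have full rank; both $d$ and a basis of $\L'$ are computable from a basis of $\L$ in polynomial time via Hermite normal form. Write $Q = \{A\x \le \b\}$ and split the rows by the sign of the $x_1$-coefficient into $I_+, I_-, I_0$; each $i^\pm \in I_\pm$ yields an affine form $\ell^{(i^\pm)}_\pm(\y)$ so that the fiber of $\proj$ over $\y$ is $[a(\y), b(\y)] = [\max_{i^- \in I_-} \ell^{(i^-)}_-(\y),\, \min_{i^+ \in I_+} \ell^{(i^+)}_+(\y)]$. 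I would decompose $\proj(Q)$ into $O(\eta(Q)^2)$ Fourier--Motzkin cells $C_{i^+, i^-}$ on which specified indices realize the max and min (with lex-min conventions to make the cells pairwise disjoint, plus boundary cells for unbounded fibers when one of $I_\pm$ is empty); on each such cell $C$, the functions $a,b$ are fixed affine forms $\ell_-(\y), \ell_+(\y)$.

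Within each cell $C$, split further into $C^{\text{big}} := C \cap \{\ell_+ - \ell_- \ge d\}$ and $C^{\text{small}} := C \cap \{\ell_+ - \ell_- < d\}$, placing unbounded fibers into $C^{\text{big}}$. On $C^{\text{big}}$ the fiber has length at least $d$, so it meets every coset of $d\Z$; hence $\y$ has a lift in $\L + \v_s$ iff $\y \in \L' + \proj(\v_s)$, giving the patterned polyhedron $C^{\text{big}} \cap \proj(\pat L)$ with period $\L'$. For $C^{\text{small}}$, introduce the lift-residue homomorphism $\phi \colon \L' \to \R/d\Z$ defined by $\phi(\proj(\bbu)) := u_1 \bmod d$ for $\bbu = (u_1, \ldots, u_{n+1}) \in \L$; this is well defined because $\L \cap \Z\bbe_1 = d\Z\bbe_1$. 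Then for each $s$, $(x_1, \y) \in \L + \v_s$ iff $\y - \proj(\v_s) \in \L'$ and $x_1 \equiv v_{s,1} + \phi(\y - \proj(\v_s)) \pmod d$.

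The key construction is the refined sublattice
$$
\L''' \; := \; \L' \,\cap\, \{\w \in \L' : \ell^{\text{lin}}_+(\w) \in d\Z\} \,\cap\, \{\w \in \L' : \ell^{\text{lin}}_-(\w) \in d\Z\} \,\cap\, \ker(\phi),
$$
a finite-index sublattice of $\L'$ computable in polynomial time (each condition is an integer-linear congruence modulo a bounded modulus). The crucial observation is that on any coset $\L''' + \w_0$ intersected with $C^{\text{small}}$, the value of $\lambda(\y) := \ell_+(\y) - \ell_-(\y)$ is forced to a single constant: its linear part takes values in $d\Z$ on $\L'''$, while $\lambda \in [0, d)$ on $C^{\text{small}}$ leaves exactly one admissible residue. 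Likewise $\ell_-(\y) \bmod d$ and $\phi(\y - \proj(\v_s))$ are constant on the coset. Consequently, for each $s$, the lift-existence condition---is there $x_1 \in [\ell_-(\y), \ell_+(\y)]$ with $x_1 \equiv v_{s,1} + \phi(\y - \proj(\v_s)) \pmod d$?---depends only on the coset, so
$$
\proj(Q \cap \pat L) \cap C^{\text{small}} \; = \; C^{\text{small}} \cap \pat T_C,
$$
where $\pat T_C$ is the union of those cosets of $\L'''$ on which some $s$ produces a valid lift---a pattern with period $\L'''$.

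Aggregating the big and small pieces over all $O(\eta(Q)^2)$ cells yields the decomposition \eqref{eq:decomp} with $r, \eta(R_j) = O(\eta(Q)^2)$, where every polyhedron $R_j$ and every period $\T_j$ is constructed in polynomial time, while the patterns $\pat T_j$ themselves remain implicit. The main obstacle is the $C^{\text{small}}$ analysis: one must package the fractional-part data of $\ell_\pm/d$ and the lift-residue $\phi$ into a single refined lattice $\L'''$ so that, combined with the bound $\lambda < d$ on $C^{\text{small}}$, the lift-existence condition collapses into a purely coset-wise condition. Without this collapse, $C^{\text{small}}$ would naively require further subdivision into up to $d$ polyhedral pieces keyed to fractional values of $\ell_\pm/d$, destroying the $O(\eta(Q)^2)$ bound on $r$.
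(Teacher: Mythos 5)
Your proposal is correct, and for the crucial ``thin fiber'' part it takes a genuinely different route from the paper. Both arguments begin the same way: Fourier--Motzkin cells on which the fiber over $\y$ is a single interval $[\al(\y),\be(\y)]$ with fixed affine forms, the threshold $\ell$ (your $d$) defined as the minimal positive integer with $(\ell,0,\dots,0)\in\L$, and the observation (the paper's Lemma~\ref{lem:big_fiber}) that on long-fiber pieces the projection is simply $\proj(\pat L)$ restricted to the piece, with period $\L'=\proj(\L)$. For the short-fiber pieces the paper argues geometrically and by induction on $n$: it writes each piece as $R'+D$, shows the fiber length is constant along the recession cone $D$ (hence $\wt\al=\wt\be$ on $\span(D)$), obtains the period as $\proj(\L\cap\wt\al(\span(D)))$ via a parallelogram-translation argument, and must then enlarge this generally non-full-rank lattice by an auxiliary $\T^{\perp}$, invoking the inductive hypothesis for lower-dimensional pieces. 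You instead refine $\L'$ purely arithmetically to the finite-index sublattice $\L'''$ on which the linear parts of $\al$ and $\be$ take values in $d\Z$ and the lift-residue homomorphism vanishes; then for two points of $C^{\text{small}}$ in the same $\L'''$-coset the interval $[\al(\y),\be(\y)]$ translates rigidly by an integer multiple of $d$ (rigidity uses exactly the bound $\be-\al<d$) while the required residue class of $x_1$ modulo $d$ is unchanged, so membership in the projection is coset-wise constant and does not depend on which cosets make up $\pat L$. This avoids both the induction and the recession-cone analysis and automatically yields a full-rank period; the price is a finer period lattice (its index in $\L'$ can be exponentially large), which is harmless because the lemma bounds only the polyhedra and the period lattices, never the number of cosets. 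Two points you leave implicit are routine but should be said: the cells and the sets $C^{\text{big}},C^{\text{small}}$ are copolyhedra and need the rounding $\floor{\,\cdot\,}$ to become closed polyhedra without losing integer points (as in the paper), and the moduli of your congruence conditions have polynomial bit length rather than being bounded, so $\L'''$ is computed by Hermite normal form in polynomial time for fixed dimension.
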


We postpone the proof of the lemma until Subsection~\ref{ss:lemma-proj-proof}.

\subsection{Proof of Theorem~\ref{th:main_1}.}\label{sec:short_proof}
We begin with the following definitions and notation.

\begin{defi}\label{def:copoly}
{\rm
A \emph{copolyhedron} $P \subseteq \R^{d}$ is a polyhedron with possibly some open facets.
If $P$ is a rational copolyhedron, we denote by $\floor{P}$ the (closed) polyhedron obtained from $P$ by sharpening each open facet $(\a \. \x < b)$ of $P$ to $(\a \. \x \le b - 1)$, after scaling $\a$ and $b$ to integers. Clearly, we have $P \cap \Z^{d} = \floor{P} \cap \Z^{d}$.
}
\end{defi}

WLOG, we can assume $n \le m$ and the linear map $T : \R^{m} \to \R^{n}$ has $\rank(T) = n$.
Also denote by $T$ the integer matrix in $\zz^{n \times m}$ representing this linear map.
We can rearrange the coordinates in $\R^{m}$ so that the first $n$ columns in $T$ form a non-singular minor.

Recall that $X$ has the form \eqref{eq:sem_lin} with each $Q_{i} \cap \pat L_{i}$ having period $\L_{i}$.
For each $i$, define the polyhedron
\begin{equation}\label{eq:Q_hat_def}
\wh Q_{i} \coloneqq \bigl\{ (\x,\y) : \y = T\x \; \text{ and } \; \x \in Q_{i} \bigr\} \subseteq \R^{m+n}.
\end{equation}
%%, and $T(Q)$ is obtained by projecting $\wh Q$ onto the last $n$ coordinates.
Consider the pattern $\pat U_{i} = \pat L_{i} \oplus \Z^{n} \subseteq \Z^{m+n}$ with period $\U_{i} = \L_{i} \oplus \Z^{n}$.
Then $\wh Q_{i} \cap \pat U_{i}$ is a patterned polyhedron in $\R^{m+n}$ with period $\U_{i}$.
Define the projection $S : \R^{m+n} \to \R^{n}$ with $S(\x,\y) = \y$.
By \eqref{eq:Q_hat_def}, we have:
\begin{equation*}
T(Q_{i} \cap \pat L_{i}) = S(\wh Q_{i} \cap \pat U_{i}) \quad \text{and} \quad T(X) = S \left( \bigsqcup_{i=1}^{r} \. \wh Q_{i} \cap \pat U_{i} \right) =\bigcup_{i=1}^{r} S(\wh Q_{i} \cap \pat U_{i}),
\end{equation*}
We can represent $S = S_{m} \circ \dots \circ S_{1}$, where each $S_{i} : \R^{m+n-i+1} \to \R^{m+n-i}$ is a projection along the $x_{i}$ coordinate.
%%We can represent $S = S_{m} \circ \dots \circ S_{1}$, where each $S_{i} : \R^{m+n-i+1} \to \R^{m+n-i}$ is a projection along the $x_{i}$ coordinate.

Let $H \subset \rr^{m+n}$ be the subspace defined by $\y = T\x$.
First, we show that the initial $n$ projections $F = S_{n} \circ \dots \circ S_{1}$ are injective on $H$.
Indeed, assume $(\x,\y),\, (\x',\y')$ are two points in $H$ with $F(\x,\y) = F(\x',\y')$.
Since $F$ projects along the first $n$ coordinates of $\x$ and $\x'$, we have $(x_{n+1}, \dots, x_{m},\.\y) = (x'_{n+1}, \dots, x'_{m},\.\y')$.
Thus, $\y = \y'$, which implies $T \x = T \x'$.
Let $B \in \Z^{n \times n}$ be the first $n$ columns in $T$, which forms a non-singular minor as assumed earlier.
Since $T \x = T \x'$ and $(x_{n+1},\dots,x_{m}) = (x'_{n+1},\dots,x'_{m})$, we have $B\.(x_{1},\dots,x_{n}) = B\.(x'_{1},\dots,x'_{n})$.
This implies $(x_{1},\dots,x_{n}) = (x'_{1},\dots,x'_{n})$.
We conclude that $(\x,\y) = (\x',\y')$, and $F$ is injective on $H$.

By \eqref{eq:Q_hat_def}, we have $\wh Q_{i} \cap \pat U_{i} \subseteq H$ for every $i$.
Because $F : \R^{m+n} \to \R^{m}$ is injective on $H$, the semilinear structure of $\bigl( \. \bigsqcup \. \wh Q_{i} \cap \pat U_{i} \. \bigr)$ is preserved by $F$.
For convenience, we also denote by $\bigl( \. \bigsqcup \. \wh Q_{i} \cap \pat U_{i} \. \bigr)$ the semilinear set after applying $F$, which is now a subset of $\Z^{m}$.
Now we repeatedly apply Lemma~\ref{lem:proj} to the remaining projections $S_{m} \circ \dots \circ S_{n+1}$.
Starting with the projection $S_{n+1}$ applied on each piece $Q_{i} \cap \pat U_{i} \subseteq \zz^{m}$, we get:
\begin{equation}\label{eq:partial}
S_{n+1}(\wh Q_{i} \cap \pat U_{i}) \, = \, \bigsqcup_{j=0}^{r_{i}} \. R_{ij} \cap \pat T_{ij} \quad \; \text{for} \ \ 1 \le i \le k\ts,\, 1 \le j \le r_{i}\ts,
\end{equation}
where each $R_{ij} \cap \pat T_{ij}$ is a patterned polyhedron in $\Z^{m-1}$ with period $\T_{ij}$.
Note that two polyhedra $R_{ij}$ and $R_{i'j'}$ can be overlapping if $i \neq i'$.
However, we can refine all $R_{ij}$ into polynomially many disjoint copolyhedra $P_{1},\dots,P_{e} \subseteq \R^{m-1}\.$, so that
\begin{equation}\label{eq:refine_pieces}
\bigcup_{i=1}^{k} \. \bigcup_{j=1}^{r_{i}} \. R_{ij} \, = \, \bigsqcup_{d=1}^{e} P_{d}\..
\end{equation}
%%Here each $P_{d}$ is a copolyhedron in $\R^{m+n-1}$.
For each $P_{d}$, there is a pattern $\pat W_{d}$ with period $\W_{d} \subseteq \Z^{m-1}$ which fits with those $\pat T_{ij}$ for which $P_{d} \subseteq R_{ij}$.
The (full-rank) period  $\W_{d}$ can simply be taken as the intersection of polynomially many (full-rank) periods $\T_{ij}$ for which $P_{d} \subseteq R_{ij}$.
Taking intersections of lattices in a fixed dimension can be done in polynomial time using Hermite Normal Form (see~\cite{KB}).
%%Note that $\W_{d}$ still has polynomial length.
We also round each $P_{d}$ to $\floor{P_{d}}$ (see Definition~\ref{def:copoly}).
From~\eqref{eq:partial} and~\eqref{eq:refine_pieces} we have:
\begin{equation*}
S_{n+1} \Big( \bigsqcup_{i=1}^{k} \wh Q_{i} \cap \pat U_{i} \Big) \, = \, \bigcup_{i=1}^{k} S_{n+1}(\wh Q_{i} \cap \pat U_{i}) \, = \, \bigsqcup_{d=1}^{e} \; \floor{P_{d}} \cap \pat W_{d} \..
\end{equation*}
The above RHS is a semilinear set in $\Z^{m-1}$.
A similar argument applies to $S_{m} \circ \dots \circ S_{n+2}$.
In the end, we have a semilinear decomposition for $T(X) \subseteq \Z^{n}$, as in \eqref{eq:decomp_many}.

Using Lemma~\ref{lem:proj}, we can bound the number of polyhedra $r_{i}$ in \eqref{eq:partial}, and also
the number of facets $\eta(R_{ij})$ for each $R_{ij}$.
It is well known that any $q$ hyperplanes in $\R^{m}$ partition the space into at most $O(q^{m})$ polyhedral regions.
This gives us a polynomial bound on $e$, the number of refined pieces in \eqref{eq:refine_pieces}.
By a careful analysis, after $m$ projections, the total number $r$ of pieces in the final decomposition \eqref{eq:decomp_many} is at most $\eta(X)^{O(m!)}$.
Each piece $R_{j}$ also has at most $\eta(X)^{O(m!)}$ facets.
\ $\sq$

\subsection{Proof of Lemma~\ref{lem:proj}.} \label{ss:lemma-proj-proof}

%\begin{proof}%[of Lemma~\ref{lem:proj}]
The proof is by induction on $n$. The case $n=0$ is trivial.
For the rest of the proof, assume $n \ge 1$.

Let $\pat L \subseteq \Z^{n+1}$ be a full-rank pattern with period $\L$ as in the lemma.
Then, the projection of $\pat L$ onto $\Z^{n}$ is another pattern $\pat L'$ with full-rank period $\L' = \proj(\L)$.\footnote{Here a basis for $\L'$ can be computed in polynomial time by applying Hermite Normal Form to a basis of $\L$, whose first coordinates $x_{1}$ should be set to $0$.}
Since $\L$ is of full rank, we can define
\begin{equation}\label{eq:ldef}
\ell = \min \{ t \in \Z_{+} : (t,0,\dots,0) \in \L\}.
\end{equation}

Let $R = \proj(Q)$.
Assume $Q$ is described by the system $A\x \le \b$.
Recall the \emph{Fourier--Motzkin elimination method}  (see  \cite[\S 12.2]{Schrijver}), which gives the facets of~$R$ from those of~$Q$.
First, rewrite and group the inequalities in $A\x \le \b$ into
\begin{equation}\label{eq:FM}
A_{1}\y + \b_{1} \le x_{1} ,\quad x_{1} \le A_{2}\y + \b_{2}  \quad \text{ and } \quad A_{3}\y \le \b_{3},
\end{equation}
where $\y = (x_{2},\dots,x_{n+1}) \in \R^{n}$.
%%based on the signs of $x_{1}$.
Then $R$ is described by a system $C\y \le \d$, which consists of $(A_{3}\y \le \b_{3})$ and $(\a_{1} \y + b_{1} \le \a_{2} \y + b_{2})$ for every possible pair of rows $\a_{1} \y + b_{1}$ and $\a_{2} \y + b_{2}$ from the first two systems in \eqref{eq:FM}.

In case one of the two systems $A_{1}\y + \b_{1} \le x_{1}$ and $x_{1} \le A_{2}\y + \b_{2}$ is empty, then $R$ is simply described by $A_{3}\y \le \b_{3}$.
Also in this case, the preimage $\proj^{-1}(\y)$ of every point $\y \in R$ is infinite.
By the argument in Lemma~\ref{lem:big_fiber} below, we have a simple description $\proj(Q \cap \pat L)\. = \. R \cap \pat L'$, which finishes the proof.
So now assume that the two systems $A_{1}\y + \b_{1} \le x_{1}$ and $x_{1} \le A_{2}\y + \b_{2}$ are both non-empty.
Then we can decompose
\begin{equation}\label{eq:FM_decomp}
R \. = \. \bigsqcup_{j=1}^{r} P_{j},
\end{equation}
where each $P_{j}$ is a copolyhedron, so that over each $P_{j}$, the largest entry in the vector $A_{1} \y + \b_{1}$ is $\a_{j1} \y + b_{j1}$ and the smallest entry in the vector $A_{2} \y + \b_{2}$ is $\a_{j2}\y + b_{j2}$.
Thus, for every $\y \in P_{j}$, we have $\iproj(\y) = [\al_{j}(\y),\be_{j}(\y)]$, where $\al_{j}(\y) = \a_{j1} \y + b_{j1}$ and $\be_{j}(\y) = \a_{j2}\y + b_{j2}$ are affine rational functions.
Let $m = \eta(Q)$.
Note that the system $C\y \le \d$ describing $R$ contains at most $O(m^{2})$ inequalities, i.e., $\eta(R) = O(m^{2})$.
Also, we have $r = O(m^{2})$ and $\eta(P_{j}) = O(m)$ for $1 \le j \le r$.

For each $\y \in R$, the preimage $\iproj(\y) \subseteq Q$ is a segment in the direction $x_{1}$. Denote by $|\iproj(\y)|$ the length of this segment.
Now we refine the decomposition in \eqref{eq:FM_decomp} to
\begin{equation}\label{eq:refined_decomp}
R \. = \. R_{0} \sqcup R_{1} \sqcup \dots \sqcup R_{r}\., \quad \ \text{where}
\end{equation}

\begin{enumerate}[label=\alph*)]
\item Each $R_{j}$ is a copolyhedron in $\R^{n}$, with $\eta(R_{j}) = O(m^{2})$ and $r = O(m^{2})$.
\item For every $\y \in R_{0}$, we have the length $|\iproj(\y)| \ge \ell$.
\item For every $\y \in R_{j}$ ($1 \le j \le r$), we have the length $|\iproj(\y)| < \ell$. Furthermore, we have $\iproj(\y) = [\al_{j}(\y),\be_{j}(\y)]$, where $\al_{j}$ and $\be_{j}$ are affine rational functions in $\y$.
\end{enumerate}
This refinement can be obtained as follows. First, define
\begin{equation*}
R_{0} = \proj [ Q \cap (Q + \ell \v_{1}) ] \subseteq R,
\end{equation*}
where $\v_{1} = (1,0,\dots,0)$.
The facets of $R_{0}$ can be found from those of $Q \cap (Q + \ell \v_{1})$ again by Fourier--Motzkin elimination, and also $\eta(R_{0}) = O(m^{2})$.
Observe that $|\iproj(\y)| \ge \ell$ if and only if $\y \in R_{0}$.
Define $R_{j} \coloneqq P_{j} \cpl R_{0}$ for $1 \le j \le r$.
Recall that for every $\y \in P_{j}$, we have $\iproj(\y) = [\al_{j}(\y),\be_{j}(\y)]$. Therefore,
\begin{equation*}
R_{j} \. = \. P_{j} \cpl R_{0} \, = \, \{\y \in P_{j} : |\iproj(\y)| < \ell\}  \, = \, \{\y \in P_{j} : \al_{j}(\y) + \ell > \be_{j}(\y)\} \ts .
\end{equation*}
It is clear that each $R_{j}$ is a copolyhedron satisfying condition c).
Moreover, for each $1 \le j \le r$, we have $\eta(R_{j}) \le \eta(P_{j}) + 1 = O(m)$.
By \eqref{eq:FM_decomp}, we can decompose:
\begin{equation*}
R \. = \. R_{0} \ts\sqcup\ts (R \cpl R_{0}) \. = \. R_{0} \; \sqcup \; \bigsqcup_{j=1}^{r} (P_{j} \cpl R_{0}) \, = \, \bigsqcup_{j=0}^{r} \ts R_{j}\..
\end{equation*}
This decomposition satisfies all conditions a)--c) and proves \eqref{eq:refined_decomp}.
Note also that by converting each $R_{j}$ to $\floor{R_{j}}$, we do not lose any integer points in $R$.
Let us show that the part of $\proj(Q \cap \pat L)$ within $R_{0}$ has a simple pattern:

\begin{lemma}\label{lem:big_fiber}
% \begin{equation}\label{eq:nice_piece}
$\proj(Q \cap \pat L) \cap R_{0} \. = \. R_{0} \cap \pat L'$.
% \end{equation}
\end{lemma}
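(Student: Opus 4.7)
The plan is to verify both inclusions directly from the definitions, with the only substantive ingredient being the choice of $\ell$ in \eqref{eq:ldef}. The inclusion $\proj(Q \cap \pat L) \cap R_0 \subseteq R_0 \cap \pat L'$ is immediate: if $\y$ lies in this intersection, then some integer point $(x_1, \y) \in Q \cap \pat L$ exists, and projecting gives $\y \in \proj(\pat L) = \pat L'$, while $\y \in R_0$ by assumption.

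For the reverse inclusion $R_0 \cap \pat L' \subseteq \proj(Q \cap \pat L)$, I would fix $\y \in R_0 \cap \pat L'$ and exhibit an integer lift into $Q \cap \pat L$. Since $\y \in \pat L'$, there is some $x_1^0 \in \Z$ with $(x_1^0, \y) \in \pat L$. Because $\pat L$ is a union of $\L$-cosets and $(\ell, 0, \dots, 0) \in \L$ by the definition of $\ell$, the entire arithmetic progression $\{(x_1^0 + k\ell, \y) : k \in \Z\}$ lies inside $\pat L$. Meanwhile, $\y \in R_0$ means the fiber $\iproj(\y)$ is a closed segment $[\alpha(\y), \beta(\y)]$ in the $x_1$-direction of length at least $\ell$. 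It therefore suffices to locate some $x_1 \in (x_1^0 + \ell\Z) \cap [\alpha(\y), \beta(\y)]$, since any such $x_1$ produces a point $(x_1, \y) \in Q \cap \pat L \cap \Z^{n+1}$ that projects to $\y$.

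The crux is the elementary pigeonhole observation that a closed real interval of length at least $\ell$ meets every residue class modulo $\ell$; this follows from the estimate $\lfloor \beta(\y) \rfloor - \lceil \alpha(\y) \rceil + 1 \ge \ell$ whenever $\beta(\y) - \alpha(\y) \ge \ell$. I do not foresee any other obstacle; indeed the definition of $\ell$ in \eqref{eq:ldef} as the smallest positive $x_1$-shift lying in $\L$ is exactly what makes $R_0$ the natural locus where one full lattice period is guaranteed to fit inside the fiber, leaving the short-fiber regions $R_j$ for $1 \le j \le r$ to be handled separately in the subsequent case analysis.
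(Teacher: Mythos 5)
Your proof is correct and follows essentially the same route as the paper: the forward inclusion is immediate from $\proj(\pat L)=\pat L'$, and the reverse inclusion lifts $\y$ to a point of $\pat L$ and then shifts it along the vertical direction by a multiple of $(\ell,0,\dots,0)\in\L$ (which preserves membership in $\pat L$) into the fiber $\iproj(\y)$, which condition b) guarantees is long enough to contain such a shift. The explicit pigeonhole justification you give is just a spelled-out version of the paper's "since $|\iproj(\y)|\ge\ell$, we can find $\x'$ with $\x'-\x\in\L$" step, so no gap.
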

\begin{proof}
Recall that $\proj(\pat L) = \pat L'$, which implies LHS $\subseteq$ RHS.
On the other hand, for every $\y \in \pat L'$, there exists $\x \in \pat L$ such that $\y=\proj(\x)$.
If $\y \in R_{0} \cap \pat L'$, we also have $|\iproj(\y)| \ge \ell$ by condition b), with $\ell$ defined in \eqref{eq:ldef}.
The point $\x$ and the segment $\iproj(\y)$ lie on the same vertical line.
Therefore, since $|\iproj(\y)| \ge \ell$, we can find another $\x'$ such that $\x' \in \iproj(\y) \subseteq Q$ and also $\x' - \x \in \L$.
Since $\pat L$ has period $\L$, we have $\x' \in \pat L$.
This implies $\x' \in Q \cap \pat L$, and $\y \in \proj(Q \cap \pat L)$.
Therefore we have RHS $\subseteq$ LHS, and
% \eqref{eq:nice_piece}
the lemma holds.
\end{proof}

It remains to show that $\. \proj(Q \cap \pat L) \cap R_{j} \.$ also has a pattern for every~$j>0$.
By condition c), every such $R_{j}$ has a ``thin'' preimage.
Let $Q_{j} = \iproj(R_{j}) \subseteq Q$.
If $\affdim(R_{j}) < n$, we have $\affdim(Q_{j}) < n+1$.
In this case we can apply the inductive hypothesis.
Otherwise, assume $\affdim(R_{j}) = n$.
For convenience, we refer to $R_{j}$ and $Q_{j}$ as just $R$ and $Q$.
We can write $R = R' + D$, where $R' \subseteq R$ is a polytope and $D$ is the recession cone of $R$.

Consider $\y \in R$, $\mathbf{v} \in D$ and $\la > 0$. Since $\y + \la \mathbf{v} \in R$, from c) we have $\iproj(\y + \la \mathbf{v}) = [\al(\y + \la \mathbf{v}),\be(\y + \la \mathbf{v})]$.
Denote by $\wt \al$ and $\wt \be$ the linear parts of the affine maps $\al$ and $\be$.
By a property of affine maps, we have:
\begin{equation}\label{eq:lift_cone_1}
\iproj(\y+\la\mathbf{v}) = [\al(\y + \la \mathbf{v}),\be(\y + \la \mathbf{v})] = [\al(\y) + \la \wt \al(\mathbf{v}),\; \be(\y) + \la \wt \be(\mathbf{v})].
\end{equation}
Therefore,
\[
|\iproj(\y + \la \mathbf{v})| = \be(\y) - \al(\y) + \la \bigl(\wt \be - \wt \al\bigr) (\mathbf{v}).
\]
Since $(\y + \la \mathbf{v}) \in R$, by c) we have:
\[
0 \le |\iproj(\y + \la \mathbf{v})| = \be(\y) - \al(\y) + \la \bigl(\wt \be - \wt \al\bigr) (\mathbf{v}) < \ell.
\]
Because $\la > 0$ is arbitrary, we must have $\bigl(\wt \be - \wt \al\bigr)(\mathbf{v}) = 0$.
This holds for all $\mathbf{v} \in D$.
We conclude that $\wt\be - \wt\al$ vanishes on the whole subspace $H \coloneqq \span(D)$, i.e., for any $\mathbf{v} \in H$ we have $\wt\al(\mathbf{v}) = \wt\be(\mathbf{v})$.
Thus, we can rewrite \eqref{eq:lift_cone_1} as
\begin{equation}\label{eq:lift_cone_2}
\iproj(\y + \la \mathbf{v}) = [\al(\y),\be(\y)] + \la \wt\al(\mathbf{v}) = \iproj(\y) + \la \wt\al(\mathbf{v}).
\end{equation}

Define $C \coloneqq \wt\al(D)$ and $G \coloneqq  \wt \al (H)$.
%%From above, we also have $C = \wt\be(D)$ and $G = \wt\be(H)$.
Note that $\span(C) = G$, because $\span(D) = H$.
Recall that $R = R' + D$ with $R'$ a polytope.
In \eqref{eq:lift_cone_2}, we let $\y$ vary over $R'$, $\lambda$ vary over $\R_{+}$ and $\mathbf{v}$ vary over $D$.
The LHS becomes $Q = \iproj(R)$.
The RHS becomes $\iproj(R')  + C$.
Therefore, we have $Q = \iproj(R') + C$.
Since $\iproj(R')$ is a polytope, we conclude that $C$ is the recession cone for $Q$.

Because $\iproj(\y) = [\al(\y), \be(\y)]$ for every $\y \in R$, the last $n$ coordinates in $\al(\y)$ and $\be(\y)$ are equal to $\y$.
This also holds for $\wt\al(\y)$ and $\wt\be(\y)$, i.e., $\proj(\wt\al(\y)) = \proj(\wt\be(\y)) = \y$.
This implies $\proj(G) = H$, because $G = \wt\al(H)$.
In other words, $\wt\al$ is the inverse map for $\proj$ on $G$ (see Fig.~\ref{pic:proj}).

%% The picture to keep in mind at this point is that $Q$ stays in between two hyperplanes $\al$ and $\be$ parallel to $G$. Thus, the period $\L$ of $Q \cap \pat L$ is actually in the ``direction'' of $C$.

\begin{figure}[!h]
\centering
\begin{tikzpicture}[scale=1.5]
\draw[-,line width = 0.8] (-0.8,1.5)--(0.2,0)--(1,0)--(2,1.0);
\node[above] (R) at (0.2,0.2) {$R$};

%%R'
\draw[-,line width = 1, blue] (0.2,0)--(1,0);

\fill[gray!35] (0.2,1.0)--(0.6,0.4)--(1.23,1.0);
\draw[dashed,->] (0.6,0.4)--(0.2,1.0);
\draw[dashed,->] (0.6,0.4)--(1.23,1.0);
\node[above] (D) at (0.66,0.6) {$D,H$};

\draw[-,line width = 0.8] (1.5,1.6)--(1,1.4)--(0.2,1.5)--(-0.5,2.2);
\draw[-,line width = 0.8] (1.5,1.4)--(1,1.2)--(0.2,1.3)--(-0.5,2.0);
\node[above] (Q) at (0.2,1.65) {$Q$};

%%Q'
\fill[blue!20] (0.2,1.5)--(0.2,1.3)--(1,1.2)--(1,1.4);
\draw[-,line width = 0.8, blue] (0.2,1.5)--(0.2,1.3);
\draw[-,line width = 0.8, blue] (1,1.4)--(1,1.2);
\draw[-,line width = 1,blue] (1,1.4)--(0.2,1.5);
\draw[-,line width = 1,blue] (1,1.2)--(0.2,1.3);

\fill[gray!35] (0.15,2.28)--(0.6,1.8)--(1.4,2.09);
\draw[dashed,->] (0.6,1.8)--(0.15,2.28);
\draw[dashed,->] (0.6,1.8)--(1.4,2.09);
\node[above] (C) at (0.7,1.85) {$C,G$};

\draw[->] (-0.1,1.5)--(-0.1,1.1);
\node[left] (proj) at (-0.1,1.3) {$\proj$};

\draw[->] (1.7,1.1)--(1.7,1.5);
\node[right] (al) at (1.7,1.3) {$\wt\al$};

\end{tikzpicture}
\caption{$R$ and $Q = \iproj(R)$, with $R'$ and $\iproj(R')$ shown in blue. The cones $C$ and $D$ span $G$ and $H$, respectively.}
\label{pic:proj}%
\end{figure}

Recall that $Q \cap \pat L$ is a patterned polyhedron with period $\L$, and $\proj(Q) = R$.
Define
\[
\LS \coloneqq \L \cap G \quad \text{and} \quad \T \coloneqq \proj(\LS) \subset \proj(G) = H.
\]
Since $\L$ is full-rank, we have $\rank(\LS) = \dim(G)$.
Since $\wt\al$ and $\proj$ are inverse maps, we have $\LS = \wt\al(\T)$.
We claim that $\proj(Q \cap \pat L) \subset R$ is a patterned polyhedron with period $\.\T$.
Indeed, consider any two points $\, \y_{1},\y_{2} \in R \,$ with $\y_{2} - \y_{1} \in \T$.
Assume that $\y_{1} \in \proj(Q \cap \pat L)$, i.e., there exists $\x_{1} \in Q \cap \pat L$ with $\proj(\x_{1}) = \y_{1}$.
We show that $\y_{2} \in \proj(Q \cap \pat L)$.
First, we have $\iproj(\y_1) = [\al(\y_{1}),\be(\y_{1})]$ and $\iproj(\y_2) = [\al(\y_{2}),\be(\y_{2})]$.
Let $\mathbf{v} = \y_{2} - \y_{1} \in \T \subset H$.
Since $\y_{2}=\y_{1}+\mathbf{v}$, we can apply~\eqref{eq:lift_cone_2} with $\lambda=1$ and get:
\begin{equation}
[\al(\y_{2}),\be(\y_{2})] = \iproj(\y_{2}) = \iproj(\y_{1} + \mathbf{v}) = [\al(\y_{1}),\be(\y_{1})] + \wt\al(\mathbf{v}).
\end{equation}
Thus, we have $\al(\y_{1}) - \be(\y_{1}) = \al(\y_{2}) - \be(\y_{2})$.
In other words, the points $\al(\y_{1}), \be(\y_{1}), \al(\y_{2})$ and $\be(\y_{2})$ form a parallelogram inside $Q$.
Since $\proj(\x_{1}) = \y_{1}$, we have:
\begin{equation*}
\x_{1} \in \iproj(\y_{1}) = [\al(\y_{1}),\be(\y_{1})] \subseteq Q.
\end{equation*}
So $\x_{1}$ lies on the edge $[\al(\y_{1}),\be(\y_{1})]$ of the parallelogram mentioned above.
Therefore, we can find another point $\x_{2}$ lying on the other edge $[\al(\y_{2}),\be(\y_{2})] = \iproj(\y_{2})$ with
\[
\x_{2} - \x_{1} = \al(\y_{2}) - \al(\y_{1}) = \wt \al(\y_{2} - \y_{1}) = \wt \al (\mathbf{v}) \in \wt \al(\T) = \LS.
\]
This $\x_{2}$ satisfies $\proj(\x_{2}) = \y_{2}$.
Recall that $\x_{1} \in \pat L$, with $\pat L$ having period $\L$.
Since $\x_{2} - \x_{1} \in \LS \subset \L$, we have $\x_{2} \in \pat L$.
This implies $\x_{2} \in Q \cap \pat L$ and $\y_{2} \in \proj(Q \cap \pat L)$.

So we have established that $\proj(Q \cap \pat L) \subset R$ is a patterned polyhedron with period $\T$.
Note that
$$\rank(\T)  = \rank(\LS) = \dim(G) = \dim(H) = \dim(D).$$
If $\dim(D) = n$ then $\T$ is full-rank.
If $\dim(D) < n$, recall that $R = R' + D$ where $R'$ is a polytope, and $\span(D) = H$.
Let $H^{\perp}$ be the complement subspace to $H$ in $\R^{n}$, and $R^{\perp}$ be the projection of $R'$ onto $H^{\perp}$.
Since $R^{\perp}$ is bounded, we can take a large enough lattice $\T^{\perp} \subset H^{\perp}$ such that there are no two points $\z_{1} \neq \z_{2} \in R^{\perp}$ with $\z_{1} - \z_{2} \in \T^{\perp}$.
Now the lattice $\T^{\perp} \oplus \T$ is full-rank, which can be taken as a period for $\proj(Q \cap \pat L)$.

To summarize, for every piece $R_{j}$ and $Q_{j} = \proj^{-1}(R_{j})$, $1 \le j \le r$, the projection $\proj(Q_j \cap \pat L) \subset R_{j}$ has period~$\T_{j}$. Thus $\proj(Q_{j} \cap \pat L)$ is a patterned polyhedron.
This completes the proof.
\ $\sq$

\bigskip

\section{Finding short GF for unbounded projection}\label{sec:GF}

\subsection{Barvinok--Woods algorithm}

In this section, we are again assuming that dimensions $m$ and $n$ are fixed. We recall the Barvinok--Woods algorithm, which finds in polynomial time a short GF for the projection of integer points in a polytope:

\begin{theo}[\cite{BW}]\label{th:BW}
Let $m, n \in \nn$ be fixed dimensions.
Given a rational polytope $Q = \{\x \in \R^{m} : A\x \le \b\}$,
%%and a matrix $T \in \zz^{n \times m}$ representing a linear transformation $T : \Z^{m} \to \Z^{n}$,
and a linear transformation $T : \rr^{m} \to \rr^{n}$ represented as a matrix $T \in \zz^{n \times m}$,
there is a polynomial time algorithm to compute a short GF for $T(Q \cap \Z^{m})$ as:
\begin{equation}\label{eq:shortGF}
g(\t) \, = \, \sum_{\y \; \in \; T(Q \cap \, \Z^{m})} \t^{\y}  \; = \; \sum_{i=1}^{M} \frac{c_{i} \. \t^{\a_{i}}}{(1-\t^{\b_{i1}}) \dots (1-\t^{\b_{i s}})},
\end{equation}
where $c_{i}=p_{i}/q_{i} \in \QQ ,\; \a_{i},\b_{ij} \in \Z^{n} ,\; \b_{ij} \neq 0$ for all $i,j$, and $s$ is a constant depending only on~$m$.
Furthermore, the short GF $g(\t)$ has length $\phi(g) = \polyin(\phi(Q) + \phi(T))$, where
\begin{equation}\label{GFsize}
\phi(g) \, = \,
\sum_{i} \. \lceil\log_2 |p_{i} \. q_{i}|+1\rceil \, + \,
\sum_{i, j} \. \lceil\log_2 a_{i\ts j}+1\rceil \, + \,
\sum_{i,j,k} \. \lceil\log_2 b_{i \ts j \ts k}+1\rceil\ts.
\end{equation}
\end{theo}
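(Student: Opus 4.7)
The plan is to combine Barvinok's short-GF enumeration theorem for integer points in a rational polytope with a support-extraction procedure for short generating functions.  I break the argument into three conceptual steps.

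\emph{Step 1: short GF for $Q\cap\Z^{m}$.}  Apply Barvinok's original algorithm to $Q$ to obtain in polynomial time
\[
F(\x) \,=\, \sum_{\alpha\in Q\cap\Z^{m}} \x^{\alpha} \,=\, \sum_{i} \frac{c_{i}\,\x^{\a_{i}}}{\prod_{j}(1-\x^{\b_{ij}})},
\]
in which the number of binomial factors per summand is bounded by a constant depending only on $m$.  This is the classical construction via Brion's formula and unimodular-cone decomposition of the tangent cones of $Q$ at its vertices.

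\emph{Step 2: monomial substitution to encode the image with multiplicities.}  Substitute $x_{i}\mapsto\prod_{j=1}^{n} t_{j}^{T_{ji}}$ into each summand of $F$; formally this sends $\x^{\alpha}\mapsto\t^{T\alpha}$.  A direct substitution may be singular when some transported denominator $(1-\t^{T\b_{ij}})$ collapses to $1-1=0$, i.e.\ when $T\b_{ij}=\mathbf{0}$.  To handle this, pick a generic rational direction $\u\in\Q^{n}$, pre-substitute $t_{j}\leftarrow t_{j}\,\tau^{u_{j}}$, carry out the main substitution, and take the limit $\tau\to 1$ via the analytic-continuation / L'H\^opital-type procedure from~\cite{B1}, which runs in polynomial time summand by summand.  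The output is a short GF
\[
h(\t) \,=\, \sum_{\alpha\in Q\cap\Z^{m}} \t^{T\alpha} \,=\, \sum_{\y} m(\y)\,\t^{\y},
\]
whose coefficient $m(\y)=|T^{-1}(\y)\cap Q\cap\Z^{m}|$ records the multiplicity of $\y$ as an image.  Note that $T(Q\cap\Z^{m})\subset\R_{+}^{n}$, so the support is bounded in each coordinate and no formal-series convergence issue arises.

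\emph{Step 3: remove multiplicities.}  Convert $h(\t)$ into $g(\t)=\sum_{\y\in T(Q\cap\Z^{m})}\t^{\y}$ by extracting the support.  Treat this as $n$ successive one-coordinate projections $\pi_{k}:\Z^{k}\to\Z^{k-1}$.  At each step the task is: given a short GF for a set $S\subseteq\Z^{k}$ with unit coefficients, produce one for $\pi_{k}(S)\subseteq\Z^{k-1}$.  Pick the lexicographically smallest element of each fiber as a canonical representative; the GF of representatives is then a Hadamard-type intersection of the GF of $S$ with the GF of $\{\s\in\Z^{k}:\s-\bbe_{k}\notin S\}$.  Barvinok--Woods show that such Hadamard intersections of short GFs can be computed in polynomial time by intersecting the underlying unimodular cone decompositions and re-triangulating, producing a short GF whose summand count and binomial count per summand are controlled.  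Iterating $n$ times yields $g(\t)$ in the required short form.

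The bit-length bound~\eqref{GFsize} follows by tracking $\phi(\cdot)$ through the three steps: Barvinok's algorithm, substitution with perturbation, and each of the $n$ Hadamard-intersection passes each blow $\phi$ up by at most a polynomial factor, while the number of binomial factors per summand stays bounded by a constant depending only on $m$ and $n$.  The main obstacle is Step~3: the polynomial-time Hadamard / support-extraction operation on short GFs is the technical core of Barvinok--Woods, demanding a careful unimodular-cone intersection argument and tight control over the number of summands created at each iteration.  Steps~1 and~2 are, by comparison, a black-box application of Barvinok's algorithm and a standard perturbed substitution.
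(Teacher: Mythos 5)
First, a point of reference: the paper does not prove this statement at all --- Theorem~\ref{th:BW} is quoted from \cite{BW} and used as a black box --- so your sketch can only be measured against the actual Barvinok--Woods argument, and against that it has a genuine gap in Step~3. The criterion ``$\s$ is the canonical representative of its fiber iff $\s-\bbe_{k}\notin S$'' is valid only when every fiber of the one-coordinate projection is an interval of consecutive integers. That holds for the first projection, because the fibers of $Q\cap\Z^{m}$ along a coordinate direction are the lattice points of segments. It fails from the second projection on: $\pi_{k}(S)$ is no longer the lattice-point set of a convex body, its fibers can have gaps (e.g.\ a fiber $\{0,2\}$), and your rule then selects several representatives in one fiber, so the resulting GF acquires coefficients greater than $1$ and the iteration collapses. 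This is precisely the difficulty that makes the projection theorem hard: a point is minimal in its fiber iff $\x+\u\notin S$ for \emph{all} $\u\in\ker T\cap\Z^{m}$ with $\u\prec 0$, an infinite family of conditions, and the substance of \cite{BW} is reducing this to polynomially many test vectors via a reduced basis of $\ker T\cap\Z^{m}$ and a subdivision of $Q$ into pieces where fibers are ``long'' (hence exhibit periodic behaviour) or ``short'' (hence admit few test vectors) --- the same long/short dichotomy that reappears in Lemma~\ref{lem:proj} of the present paper. The Hadamard product is a necessary tool, but it is not where the difficulty lives, and invoking it does not repair the representative-selection step.

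Step~2 is also a detour you never actually use: you compute the multiplicity series $h(\t)=\sum_{\y} m(\y)\,\t^{\y}$ and announce you will ``extract the support,'' but no polynomial-time support extraction is known for a short GF with arbitrary nonnegative coefficients (that operation is essentially as hard as the projection problem itself), and in Step~3 you silently abandon $h$ and revert to projecting the set $S$ directly. Step~1 and the one-coordinate set difference $S\setminus(S+\bbe_{k})$ via Hadamard products are fine as far as they go; what is missing is the entire mechanism that makes representative selection finite and correct beyond the first coordinate.
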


Clearly, our main result Theorem~\ref{th:main_2} is an extension
of Theorem~\ref{th:BW}.
The proof of Theorem~\ref{th:main_2} is based
on Theorem~\ref{th:main_1} and uses the following standard result:

\begin{prop}[see e.g.\ \cite{Mei}]\label{lem:triangulate}
Let $n\in \nn$ be fixed.
Let $R = \{\x \in \R^{n} : C\x \le \d\}$  be a possibly unbounded polyhedron.
There is a decomposition
\begin{equation}\label{eq:triangulate}
R = \bigsqcup_{k=1}^{t} \, R_{k} \oplus D_{k}\.,
\end{equation}
where each $R_{k}$ is a copolytope, and each $D_{k}$ is a simple cone.
Each part $R_{k} \oplus D_{k}$ is a direct sum, with $R_{k}$ and $D_{k}$ affinely independent.
All $R_{k}$ and $D_{k}$ can be found in time $\polyin(\phi(R))$.
\end{prop}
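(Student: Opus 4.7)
The plan is to use a standard polyhedral triangulation of $R$, with the technical twist that we need a disjoint (rather than merely overlapping) decomposition, hence the use of copolytopes. Since $n$ is fixed, each step below runs in time polynomial in $\phi(R)$.

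First, I would reduce to the case where $R$ is pointed. If $R$ has a non-trivial lineality space $L$, write $\R^{n} = L \oplus L^{\perp}$, set $R' = R \cap L^{\perp}$ (a pointed polyhedron), and decompose $L$ itself as a half-open direct sum of one-dimensional simple cones by choosing a basis $v_{1},\dots,v_{s}$ of $L$ and writing $L = \bigsqcup_{\ve \in \{+,-\}^{s}} C_{\ve}$ where $C_{\ve}$ is the half-open cone spanned by $\ve_{1} v_{1},\dots,\ve_{s} v_{s}$. Any decomposition $R' = \bigsqcup_{k} R_{k} \oplus D_{k}$ then pulls back to $R = \bigsqcup_{k,\ve} R_{k} \oplus (D_{k} \oplus C_{\ve})$ with the required direct-sum property. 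So from here on assume $R$ is pointed.

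Next, I would compute (in polynomial time, since $n$ is fixed) the vertex list $v_{1},\dots,v_{s}$ of $R$ and a generating set $\br_{1},\dots,\br_{p}$ of the recession cone $D = \mathrm{rec}(R)$, using the standard double-description or vertex-enumeration algorithms. Then $R = \conv(v_{1},\dots,v_{s}) + D$. Choose a generic interior point and form a triangulation of $R$ into simplicial polyhedra: each maximal piece is of the form $\conv(v_{i_{0}},\dots,v_{i_{a}}) + \mathrm{cone}(\br_{j_{1}},\dots,\br_{j_{b}})$ with $a+b = n$ and the $v_{i_{l}} - v_{i_{0}}$ together with the $\br_{j_{m}}$ linearly independent. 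This can be done by standard pulling/pushing triangulations applied to both the polytope part and a triangulation of $D$ into simple cones. Each such piece is automatically a direct sum of a simplex (copolytope candidate) and a simple cone, with $\affdim$'s adding.

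To obtain a disjoint union, I would apply a half-open refinement: fix a generic linear functional $\ell$ and, for each pair of adjacent simplicial pieces sharing a common facet, assign the shared facet to exactly one of them according to the sign of $\ell$ on the opposite vertex/ray; this turns each simplicial piece into a half-open version, i.e., a copolytope direct-summed with a (half-open, still simple) cone. The intersection of the open-face conditions with the simplex factor gives a copolytope, while the cone factor stays a simple cone, preserving the direct-sum structure. The resulting decomposition $R = \bigsqcup_{k=1}^{t} R_{k} \oplus D_{k}$ has all required properties. The main obstacle is executing the half-open refinement so that the partition into open and closed facets respects the direct-sum factorization (i.e., each opened facet belongs cleanly to either the polytope part or the cone part); this is handled by choosing $\ell$ generic with respect to the fan of $D$, so that no "diagonal" facets appear.
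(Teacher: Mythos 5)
You should note at the outset that the paper itself offers no proof of this proposition (it is quoted as standard, with a pointer to \cite{Mei}), so your argument has to be measured against the statement itself. Your overall plan --- split off the lineality space, triangulate the homogenization of the pointed part into cells $\conv(v_{i_0},\dots,v_{i_a})+\mathrm{cone}(\br_{j_1},\dots,\br_{j_b})$, then force disjointness by a half-open refinement --- is the right starting point, and the polynomial-time claims are unproblematic in fixed dimension. The genuine gap is in the last step: the proposition requires every piece to be $R_k\oplus D_k$ with \emph{all} open faces carried by the bounded factor $R_k$ and with $D_k$ a closed simple cone, whereas your construction produces pieces of the form (copolytope) $\oplus$ (half-open cone) --- already in the lineality-space reduction, where the $C_{\ve}$ are half-open by design, and again in the pointed case whenever the facet you open is an unbounded (cone-type) facet. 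Openness in an unbounded direction cannot be absorbed into a bounded factor: for instance the set $\{a\br_1+b\br_3+c\br_4:\ a,b\ge 0,\ c>0\}$ (a simplicial cone minus one facet) is not a single (copolytope) $\oplus$ (closed simple cone) with affinely independent factors, since its closure forces the cone factor to be the full three-dimensional cone, which forces the bounded factor to be a point, contradicting non-closedness. Choosing $\ell$ generic does not help here; the obstruction is not ``diagonal'' facets but the location of the openness.

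The missing ingredient is an extra subdivision of each half-open cone. For a half-open simplicial cone $\{\sum_i c_i\u_i:\ c_i\ge 0 \text{ for } i\in I,\ c_i>0 \text{ for } i\notin I\}$, pick a strict generator $\u_j$ and split off the region $c_j\ge 1$, which is the translate by $\u_j$ of a cone with one fewer strict generator, from the region $0<c_j<1$, which is (open segment in direction $\u_j$) $\oplus$ (half-open cone on the remaining generators); recursing on the number of strict generators yields finitely many pieces of the required product form, with all openness in bounded factors and only a constant-factor blowup since $n$ is fixed. With this step added (and applied as well to the half-open $C_{\ve}$ from the lineality space), your argument goes through. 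A secondary, more minor point: assigning only shared facets pairwise by the sign of $\ell$ does not by itself produce a partition, since faces of codimension at least $2$ lie in several cells; you should invoke the standard half-open decomposition relative to a generic point, which treats all shared faces consistently.
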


Before proving Theorem~\ref{th:main_2}, we make an important remark:

\begin{rem}  \label{r:convergence}
{\rm
%% The requirement that $Q \in \R^{m}_{+}$ is to guarantee the convergence of the infinite GF of $Q \cap \Z^{m}$ on some open domain of variables.
%% Any polyhedron with a pointed recession cone can be transformed into one in $\R^{m}_{+}$ by mean of a unimodular transformation and integer translation, which also map integer points bijectively.
The extra condition $T(Q) \subseteq \rr_{+}^{n}$ in Theorem~\ref{th:main_2} is to make sure that the power series $\sum \t^{\y}$ of $T(Q\cap\zz^{m})$ converges on a non-empty open domain to the computed short GF.
In general, without the condition $T(Q) \subseteq \rr_{+}^{n}$, we can still make sense of the infinite GF (see Section~\ref{ss:convergence2}).
%% We sidestep the issue of convergence of infinite GFs by working
%% with formal power series.
%% For any polyhedron with a pointed recession cone, e.g. one in $\R_{+}^{n}$,
%% the Laurent series of its integer points converges on some open domain to a rational function.
%% More generally, when $T(Q)$ has pointed recession cone, we can apply an appropriate unimodular transformation to make sure $T(Q) \subseteq \rr_{+}^{n}$.
%% For the most general case when $T(Q)$ could contain some infinite lines, we can resort to the theory of valuation (see~\cite{B3,BP}) to make sense of the infinite GF.
}
\end{rem}

\subsection{Proof of Theorem~\ref{th:main_2}.}
WLOG, we can assume $\affdim(Q) = m$ and $\affdim(T(Q)) = n$.
Clearly, the set $X = Q \cap \Z^{m}$ is a semilinear set, and we want to find a short GF for $T(X)$.

First, we argue that for any bounded polytope $P \subset \R^{n}$, a short GF for $T(X) \cap P$ can be found in time $\polyin(\phi(Q) + \phi(P))$. Assume $P$ is given by a system $C\y \le \d$.
For any $\v \in P$, we have $\v \in T(X)$ if and only if the following system has a solution $\x \in \Z^{m}$:
\begin{equation}\label{eq:IP}
%%S_{\v} \; \coloneqq \;
\Bigl\{
\begin{matrix}
A\x &\le  &\b \\
T(\x) &=  &\v
\end{matrix} \;.
\end{equation}
By a well known bound on integer programming solutions (see  \cite[Cor.~17.1b]{Schrijver}), it is equivalent to find such a solution $\x$ with length at most a polynomial in the length of the system \eqref{eq:IP}.
The parameter $\v$ lies in $P$, which is bounded.
Therefore, we can find a number $N$ of binary length $\phi(N) = \polyin(\phi(P) + \phi(Q))$, such that \eqref{eq:IP} is equivalent to:
\begin{equation*}
\Biggl\{
\begin{matrix}
A\x &\le &\b \\
C\.T(\x) &\le &\d \\
-N \le \x \hspace{-.1em} &\le &N
\end{matrix} \;.
\end{equation*}
This system describes a polytope $\wh Q \subset \R^{m}$.
Applying Theorem~\ref{th:BW} to $\wh Q$, we obtain a short GF $g(\t)$ for $T(\wh Q \cap \Z^{m}) = T(X) \cap P$.

Now we are back to finding a short GF for the entire projection $T(X)$.
Applying Theorem~\ref{th:main_1} to $X$, we have a decomposition:
\begin{equation}\label{eq:BW_decomp}
T(X) = \bigsqcup_{j=1}^{r} \,R_{j} \cap \pat T_{j}\..
\end{equation}

We proceed to find a short GF $g_{j}$ for each patterned polyhedron $R_{j} \cap \pat T_{j}$ with period $\T_{j}$.
For convenience, we refer to $R_{j},\. \pat T_{j},\. \T_{j},\. g_{j}$ simply as $R ,\.\pat T ,\. \T$ and $g$.
By Proposition~\ref{lem:triangulate}, we can decompose
\begin{equation}\label{eq:direct_decomp}
R \.= \. \bigsqcup_{i=1}^{t_{j}} \, R_{i} \oplus D_{i} \, \quad \text{and} \quad R \cap \pat T \, = \,
\bigsqcup_{i=1}^{t_{j}} \, (R_{i} \oplus D_{i}) \cap \pat T \ts.
\end{equation}
Recall from Theorem~\ref{th:main_1} that $\T$ has full rank.
Let $d_{i} = \dim(D_{i})$ and $\v_{i}^{1},\dots,\v_{i}^{d_{i}}$ be the generating rays of the (simple) cone $D_{i}$.
For each $\v_{i}^{t}$, we can find $n_{t} \in \Z_{+}$ such that $\w_{i}^{t} = n_{t} \v_{i}^{t}  \in \T$.
Let $P_{i}$ and $\T_{i}$ be the parallelepiped and lattice spanned by $\w_{i}^{1},\dots,\w_{i}^{d_{i}}$, respectively.
We have $D_{i} = P_{i} + \T_{i}$ and therefore
\begin{equation}\label{eq:periodize}
R_{i} \oplus D_{i} = R_{i} \oplus (P_{i} + \T_{i}) = (R_{i} \oplus P_{i}) + \T_{i}.
\end{equation}
Each $R_{i} \oplus P_{i}$ is a copolytope.
Note that Theorem~\ref{th:BW} is stated for (closed) polytopes.
We round each $R_{i} \oplus P_{i}$ to $\floor{R_{i} \oplus P_{i}}$, where $\floor{.}$ was described in Definition~\ref{def:copoly} (Section \ref{sec:short_proof}).
By the earlier argument, we can find a short GF $h_{i}(\t)$ for $T(X) \cap (R_{i} \oplus P_{i}) = (R_{i} \oplus P_{i}) \cap \pat T$.
%%This is simply the short GF for $T(X) \cap (R_{i} + P_{i})$.
Since $\T_{i} \subseteq \T$, the pattern $\pat T$ also has period $\T_{i}$.
By \eqref{eq:periodize}, we can get the short GF $f_{i}(\t)$ for $(R_{i} \oplus D_{i}) \cap \pat T$ as
\begin{equation}\label{eq:periodize_GF}
f_{i}(\t) = \sum_{\y \in (R_{i} \oplus D_{i}) \cap \pat T} \t^{\y}
= \Bigg(\sum_{\y \in (R_{i} \oplus P_{i}) \cap \pat T}  \t^{\y} \Bigg) \cdot \Bigg( \sum_{\y \in \T_{i}} \t^{\y} \Bigg)
%%= h_{i}(\t) \; \sum_{\y \in \T_{i}} \t^{\y}
= h_{i}(\t) \; \prod_{t=1}^{d_{i}} \frac{1}{1 - \t^{\w_{i}^{t}}}.
\end{equation}
By \eqref{eq:direct_decomp}, we obtain
\begin{equation}\label{eq:sum_up}
g(\t) = \sum_{\y \in R \cap \pat T} \t^{\y} \,
%%= \sum_{i} \sum_{\y \in (R_{i} \oplus D_{i}) \cap \pat T} \t^{\y}
= \, \sum_{1 \. \le \. i \. \le \. t_{j}} f_{i}(\t).
\end{equation}

In summary, we obtained a short GF $g_{j}(\t)$ for each piece $R_{j} \cap \pat T_{j} \; (1 \le j \le r)$.
Summing over all $\. j \.$ in~\eqref{eq:BW_decomp}, we get a short GF for $T(X)$, as desired.
\ $\sq$

\bigskip

\section{Sets defined by Presburger formulas}

Now we employ Theorem~\ref{th:main_1} to analyze the structure of general semilinear sets.
For our purpose, these are best defined in the context of \emph{Presburger Arithmetic} ($\Pr$).
In this section, all variables $x,y,z,\x,\y,\z$, etc., are over $\Z$.
$\Pr$ is the first order theory on the integers that allows only additions and inequalities. In other words, each \emph{atom} (quantifier and Boolean free term) in $\Pr$ is an integer inequality of the form
\[
a_{1} x_{1} + \ldots + a_{n} x_{n} \le b,
\]
where $\x = (x_{1},\dots,x_{n})$ are integer variables, and $a_{1},\dots,a_{n},b \in \Z$ are integer constants. A \emph{$\Pr$-formula} is formed by taking Boolean combinations (negations, conjunctions, disjunctions) of such inequalities, and also applying quantifiers $\for/\ex$ over some of the variables.
A \emph{$\Pr$-sentence} is a $\Pr$-formula with all variables quantified.
For instance, an integer programming problem $\ex \x : A\x \le \b$ is an existential $\Pr$-sentence with only conjunctions.
%%This is because rational half-spaces describing a polyhedron $P$ can be rescaled to integer inequalities.

Fix $k \in \Z_{+}$ and a vector of dimensions $\n = (n_{1},\dots,n_{k}) \in \Z_{+}^{k}$.
Let $\x_{1} \in \Z^{n_{1}}, \dots, \x_{k} \in \Z^{n_{k}}$ be vectors of integer variables.
We consider the class $\Pr_{k,\n}$ consisting of $\Pr$-formulas of the form:
\begin{equation*}
(*) \qquad F \; = \;
\bigl\{ \x_{1} : Q_{2}\ts\x_{2} \dots Q_{k}\ts\x_{k} \; \Phi(\x_{1}, \dots, \x_{k}) \bigr\}.
\end{equation*}
Here~$Q_{2},\dots,Q_{k} \in \{\for,\ex\}$ are any~$k$ quantifiers, and~$\Phi(\x_{1}, \dots, \x_{k})$ is a Boolean combination of linear inequalities in~$\x_{1},\dots,\x_{k}$.
For a specific value of~$\x_{1} \in \Z^{n_{1}}$, the \emph{substituted formula}~$F(\x_{1})$ is a $\Pr$-sentence in variables~$\x_{2}, \dots, \x_{k}$.
We say that~$\x_{1}$ \emph{satisfies}~$F$ if~$F(\x_{1})$ is a true $\Pr$-sentence.
To simplify the notation, we identify a $\Pr$-formula $F$ with the set of integer points $\x_{1}$ that satisfy $F$.
%%We use~$F$ to denote both the $\Pr$-formula and the set of integer points~$\x_{1} \in \Z^{n_{1}}$ for which~$F(\x_{1})$ is true.
%%For convenience, we denote $\x_{1} \in F$ by $\x \in F$.
The length~$\phi(F)$ is the total length of all symbols and constants in~$F$ written in binary.

\begin{eg}
{\rm
The $\Pr$-formula~$F = \{x : \for y \; (5y \ge x+1) \lor (5y \le x - 1)\} \in \Pr_{2,(1,1)}$ determines the set of non-multiples of~$5$.
}
\end{eg}

By a classical result of Ginsburg and Spanier~\cite{GS}, semilinear sets (Definition~\ref{def:sem_lin}) are exactly those definable in $\Pr$, i.e., representable by a $\Pr$-formula $F$ of the form $(*)$ for some $k,\n$.
Below is our main result for this section, which generalizes Theorem~\ref{th:main_1}.
Roughly speaking, it allows us to compute in polynomial time the ``periods'' of a semilinear set when represented as a $\Pr$-formula:

\begin{theo}\label{th:pres}
Fix~$k$ and~$\n$. Given a $\Pr$-formula~$F \in \Pr_{k,\n}$, there exists a decomposition
\begin{equation*}
F = \bigsqcup_{j=1}^{r} \, R_{j} \cap \pat T_{j} \.,
\end{equation*}
where each~$R_{j} \cap \pat T_{j}$ is a patterned polyhedron in~$\R^{n_{1}}$ with period~$\T_{j} \subseteq \Z^{n_{1}}$. The polyhedra~$R_{j}$ and lattices~$\T_{j}$ can be found in time~$\polyin(\phi(F))$.
\end{theo}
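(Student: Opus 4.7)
The plan is to prove the theorem by induction on $k$, the number of quantifier blocks in the formula $F \in \Pr_{k,\n}$. The key engine will be Theorem~\ref{th:main_1}, which handles a single existential projection; the two auxiliary ingredients are the base case (quantifier-free formulas) and the fact that the complement of a semilinear set is again semilinear with a polynomially bounded description, needed to convert a universal quantifier via $\for = \neg\ex\neg$.

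\emph{Base case} $k=1$: Here $F = \{\x_1 : \Phi(\x_1)\}$ is quantifier-free. The hyperplanes $H_1,\dots,H_N$ appearing as atoms of $\Phi$ partition $\R^{n_1}$ into at most $O(N^{n_1})$ relatively open cells, on each of which every atom has constant truth value. Taking the union of cells where $\Phi$ holds, closing them, and applying the $\lfloor\cdot\rfloor$ operation from Definition~\ref{def:copoly} yields a disjoint union of polyhedra each carrying the trivial pattern $\Z^{n_1}$ with period $\Z^{n_1}$, all computable in polynomial time.

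\emph{Inductive step}: Given $F \in \Pr_{k,\n}$, define $F' = \{(\x_1,\x_2) : Q_3 \x_3 \cdots Q_k \x_k\, \Phi\} \in \Pr_{k-1,\n'}$ with $\n' = (n_1+n_2, n_3, \dots, n_k)$, which by the inductive hypothesis is a semilinear subset of $\Z^{n_1+n_2}$ with polynomial-time decomposition. If $Q_2 = \ex$, apply Theorem~\ref{th:main_1} to the projection $T : \R^{n_1+n_2} \to \R^{n_1}$ onto the first $n_1$ coordinates to obtain $F = T(F')$ in the desired form. If $Q_2 = \for$, let $F'' \in \Pr_{k-1,\n'}$ be obtained from $F'$ by flipping each inner quantifier $Q_i$ to its dual and replacing $\Phi$ with $\neg\Phi$ (still a Boolean combination of inequalities); then $F = \Z^{n_1} \setminus T(F'')$, reducing to complementation of a semilinear set.

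For the complementation, if $X = \bigsqcup_j R_j \cap \pat T_j \subseteq \Z^n$ then
\[
\Z^n \setminus X \;=\; \bigcap_j \bigl[\,(\Z^n \setminus R_j)\,\cup\,(R_j \cap (\Z^n \setminus \pat T_j))\,\bigr].
\]
Each set $\Z^n \setminus R_j$ is a union of finitely many polyhedra (flip one defining halfspace of $R_j$ at a time and refine into disjoint copolyhedra) with trivial period $\Z^n$, and $\Z^n \setminus \pat T_j$ is a pattern with the same period $\T_j$, formed from the cosets of $\T_j$ not in $\pat T_j$. The intersection of semilinear sets is handled by distributing, refining the underlying polyhedra via hyperplane arrangements, and intersecting the corresponding (full-rank) periods using Hermite Normal Form, exactly as in the proof of Theorem~\ref{th:main_1}. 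The main obstacle is tracking the blow-up: each of the $k$ inductive steps multiplies the number of pieces (both through Theorem~\ref{th:main_1} and through complementation), but since $k$ and $\n$ are fixed, the composite remains polynomial in $\phi(F)$, provided one verifies that the lattice generators produced by repeated Hermite Normal Form computations maintain polynomial bit-length.
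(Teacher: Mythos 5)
Your proposal is correct and follows essentially the same route as the paper: put the quantifier-free matrix into a disjoint polyhedral decomposition, then eliminate quantifier blocks from the inside out, applying Theorem~\ref{th:main_1} for existential blocks and semilinear complementation (complemented patterns keep the same periods, and the uncovered region is re-partitioned into polynomially many disjoint polyhedra) for universal ones, with the fixed number of steps keeping everything polynomial. The only cosmetic differences are that you organize the elimination as induction on $k$ with negations pushed syntactically into the matrix (rather than the paper's complement--project--complement at each universal step), and you obtain the disjoint decomposition via a hyperplane-arrangement argument where the paper invokes Woods' Proposition~5.2.2.
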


\begin{proof}
Consider any~$F \in \Pr_{k,\n}$ of the form:
\begin{equation*}
F = \{ \x_{1} : Q_{2}\ts \x_{2} \dots Q_{k}\ts \x_{k} \; \Phi(\x_{1}, \dots, \x_{k}) \}.
\end{equation*}
Let~$\ov\x = (\x_{1},\dots,\x_{k})$ and~$n = n_{1} + \ldots + n_{k}$.
Let us show directly that
\begin{equation*}
X = \{ \ov\x \in \Z^{n} : \Phi(\ov\x)\}
\end{equation*}
is semilinear.
Recall that~$\Phi$ is a Boolean combination of linear inequalities.
Using Proposition~5.2.2 in~\cite{Woods}, we can rewrite~$\Phi$ into a disjunctive normal form of polynomial length:
\begin{equation*}
\Phi = (A_{1} \ov\x \le \b_{1}) \lor \ldots \lor (A_{t} \ov\x \le \b_{t}).
\end{equation*}
Here, each~$A_{i} \ov\x \le \b_{i}$ is a system of inequalities, describing a polyhedron~$P_{i} \subseteq \R^{n}$.
Moreover, all polyhedra~$P_{1},\dots,P_{t}$ are pairwise disjoint, and~$\sum_{i=1}^{t} \phi(P_{i}) = \polyin(\phi(F))$.
In other words, the set~$X$ consists of integer points in a disjoint union of~$\. t \.$ polyhedra.
Thus,~$X$ is a semilinear set with~$\psi(X) = \polyin(\phi(F))$, in the notation of Definition~\ref{def:sem_lin}.

The proof goes by recursive construction of sets $X^{(k)}, X^{(k-1)},\dots,X^{(1)}$.
Let~$X^{(k)} \coloneqq X$.
If~$Q_{k} = \ex$, we consider the set
\begin{equation*}
X^{(k-1)} \. \coloneqq \. \bigl\{(\x_{1}, \dots, \x_{k-1}) : \ex \x_{k} \; \Phi(\ov\x)\bigr\} \. = \. \bigl\{(\x_{1},\dots, \x_{k-1}) : \ex \x_{k} \;  [\ov\x \in X^{(k)}] \bigr\}.
\end{equation*}
This set~$X^{(k-1)}$ is obtained from~$X^{(k)}$ by projecting along the last variable~$\x_{k}$, i.e., the last~$n_{k}$ coordinates in~$\ov\x$.
By Theorem~\ref{th:main_1}, we can find in polynomial time a decomposition of the form \eqref{eq:decomp_many} for~$X^{(k-1)}$.
Moreover, we have~$\psi(X^{(k-1)}) = \polyin(\psi(X^{(k)}))$.

Similarly, if~$Q_{k} = \for$, we consider
\begin{equation*}
X^{(k-1)} \. \coloneqq \. \bigl\{(\x_{1}, \dots, \x_{k-1}) : \for \x_{k} \; \Phi(\ov\x)\bigr\} \. =
\. \lnot \bigl\{(\x_{1},\dots, \x_{k-1}) : \ex \x_{k} \;  [\ov\x \in \lnot X^{(k)}]\bigr\}.
\end{equation*}
Here~$\lnot$ denotes the complement of a set.
Observe that the complement~$\lnot X$ of a semilinear set~$X$ is also semilinear, and~$\psi(\lnot X) = \polyin(\psi(X))$.
Indeed, assume that~$X$ has a decomposition
\[
X = \bigsqcup_{i=1}^{p} \, P_{i} \cap \pat L_{i} \. .
\]
Recall that the polyhedral pieces~$P_{i}$ are pairwise disjoint, but do not necessarily cover~$\R^{n}$.

Let us prove that the complement~$\bigl ( \R^{n} \cpl \bigsqcup_{i=1}^{p} P_{i} \bigr )$ can also be partitioned into polynomially many pairwise disjoint polyhedra.
Indeed, we can represent $\bigsqcup_{i=1}^{p} P_{i}$ by a Boolean expression of linear inequalities in $\x$.
Therefore, the complement can also be represented by a Boolean expression.
By Proposition~5.2.2 in~\cite{Woods} mentioned above, we can rewrite the complement as a disjoint union of polynomially many polyhedra~$P'_{1}, \dots, {P'_{q}}$.
From here, we obtain the decomposition:
\[
\lnot X = \bigsqcup_{i=1}^{p} \, P_{i} \cap \pat L'_{i} \; \sqcup \; \bigsqcup_{j=1}^{q} \, P'_{j} \cap \Z^{n} \.,
\]
where~$\pat L'_{i}$ is the complement of~$\pat L_{i}$, with the same period~$\L_{i}$.
Therefore, we have~$\psi(\lnot X^{(k)}) = \polyin(\psi(X^{(k)}))$.
Applying Theorem~\ref{th:main_1}, we can obtain~$X^{(k-1)}$ by projecting~$\lnot X^{(k)}$.

Applying the above argument recursively for quantifiers~$Q_{k-1},\dots,Q_{2}$, we obtain a polynomial length decomposition for the semilinear set
\begin{equation*}
X^{(1)} = \{\x_{1} \in \Z^{n_{1}} : Q_{2}\ts \x_{2} \dots Q_{k}\ts \x_{k} \; \Phi(\x) \} = F.
\end{equation*}
This completes the proof.
\end{proof}

\begin{theo}\label{th:pres_GF}
Fix~$k$ and~$\n$. Let~$F \in \Pr_{k,\n}$ be a $\Pr$-formula and $M$ be a positive integer. Denote by $f_{M}(\t)$ the partial GF
\begin{equation}\label{eq:partial_GF}
f_{M}(\t) \coloneqq \sum_{\x \in F \cap [-M, M]^{n_{1}}} \t^{\x}.
\end{equation}
Suppose there is an oracle computing $f_{M}(\t)$ as a short GF $(\divideontimes)$ in time $\mu(F,M)$.
Then there is an integer $N = N(F)$ with $\log N = \polyin(\phi(F))$, such that the GF~$f(\t) = \sum_{\x \in F} \t^{\x}$ for the entire set~$F$ can be computed as a short GF in time $\polyin(\mu(F,N))$.
The integer $N = N(F)$ can be computed in time $\polyin(\phi(F))$.
\end{theo}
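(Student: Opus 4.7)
My plan is to lift the proof of Theorem~\ref{th:main_2} to the Presburger setting by combining the decomposition from Theorem~\ref{th:pres} with the bounded oracle $f_M$ playing the role of the Barvinok--Woods algorithm. First, apply Theorem~\ref{th:pres} to decompose
\[
F \,=\, \bigsqcup_{j=1}^{r} R_{j} \cap \pat T_{j}
\]
in time $\polyin(\phi(F))$, where each $R_j \subseteq \R^{n_1}$ is a polyhedron, $\T_j$ is a full-rank period lattice, and $\phi(R_j) + \phi(\T_j) = \polyin(\phi(F))$. The short GF for $F$ will be assembled as a sum of short GFs, one per piece $R_j \cap \pat T_j$.

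For each piece I mirror the argument from the proof of Theorem~\ref{th:main_2}. Use Proposition~\ref{lem:triangulate} to write $R_j = \bigsqcup_{i} R_j^{(i)} \oplus D_j^{(i)}$ with $R_j^{(i)}$ a copolytope and $D_j^{(i)}$ a simple cone spanned by rays $\v_{j,t}^{(i)}$. Choose minimal positive integers $n_{j,t}^{(i)}$ so that $\w_{j,t}^{(i)} := n_{j,t}^{(i)} \v_{j,t}^{(i)} \in \T_j$, and let $P_j^{(i)}$ and $\Lambda_j^{(i)}$ denote the fundamental parallelepiped and lattice generated by the $\w_{j,t}^{(i)}$. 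Just as in~\eqref{eq:periodize},
\[
(R_j^{(i)} \oplus D_j^{(i)}) \cap \pat T_j \,=\, \bigl((R_j^{(i)} \oplus P_j^{(i)}) \cap \pat T_j\bigr) + \Lambda_j^{(i)},
\]
so the short GF of the piece factors as the GF of the bounded base $(R_j^{(i)} \oplus P_j^{(i)}) \cap \pat T_j$ multiplied by $\prod_{t} 1/(1 - \t^{\w_{j,t}^{(i)}})$, and summing over $(j, i)$ yields $f(\t)$.

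I take $N$ to be an explicit upper bound on the $\ell^{\infty}$-norm of every vertex of every base copolytope $R_j^{(i)} \oplus P_j^{(i)}$. Such an $N$ can be exhibited with $\log N = \polyin(\phi(F))$: standard polyhedral vertex estimates applied to the defining systems of the $R_j$ control the $R_j^{(i)}$, and each scalar $n_{j,t}^{(i)}$ is controlled by the lattice determinant $|\det \T_j|$, whose bit length is polynomial in $\phi(\T_j)$. Thus every base copolytope lies inside $[-N, N]^{n_1}$, and $N$ is computable in time $\polyin(\phi(F))$.

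The main obstacle is extracting from the oracle the short GF of each individual bounded base $(R_j^{(i)} \oplus P_j^{(i)}) \cap \pat T_j$ rather than just the combined $f_N$. To handle this, for each $(j, i)$ I construct a $\Pr$-formula $G_j^{(i)} \in \Pr_{k, \n}$ by conjoining to $F$ the linear defining inequalities of the copolytope $R_j^{(i)} \oplus P_j^{(i)}$; since each copolytope has $\polyin(\phi(F))$ facets, $G_j^{(i)}$ has length $\polyin(\phi(F))$ and its integer satisfying points are precisely the intended base. A single oracle call on $G_j^{(i)}$ with parameter $N$ returns the desired short GF in time $\polyin(\mu(F, N))$, under the natural assumption that $\mu$ is polynomial in the bit length of its formula argument. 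Multiplying by the cone factors and summing over $(j, i)$ then gives the short GF for $f(\t)$.
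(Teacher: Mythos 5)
Your overall scaffolding coincides with the paper's: decompose $F$ via Theorem~\ref{th:pres}, triangulate each piece via Proposition~\ref{lem:triangulate}, rescale the rays into the period lattice to write each unbounded part as (bounded base)${}+{}$(sublattice), and choose $N$ with $\log N = \polyin(\phi(F))$ so that every base copolytope lies in $[-N,N]^{n_1}$. The gap is in how you obtain the short GF of each bounded base $(R_j^{(i)} \oplus P_j^{(i)}) \cap F$. You propose to build a new formula $G_j^{(i)}$ by conjoining the facet inequalities of the base to $F$ and to call the oracle on $G_j^{(i)}$. But the hypothesis of the theorem only supplies an oracle for the partial GF of the \emph{given} formula $F$ in a box, with running time $\mu(F,M)$; it says nothing about other formulas in $\Pr_{k,\n}$, and $\mu$ is not assumed to behave polynomially in a formula argument. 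Your ``natural assumption'' therefore strengthens the hypothesis, and in the intended application (Theorem~\ref{th:ADL_unbounded}) the available oracle, Theorem~\ref{th:ADL}, is stated only for $\sg_{\ge k}(A)$ restricted to a box, not for $\sg_{\ge k}(A)$ conjoined with arbitrary polytope constraints. So as written the argument does not prove the theorem under its stated hypothesis.

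The paper closes exactly this step differently, with a single oracle call on $F$ itself: compute $f_N(\t)$ once, compute by Barvinok's theorem a short GF $g_i(\t)$ for the lattice points of each (rounded) base polytope $\floor{R_i \oplus P_i}$, and then form the Hadamard product $h_i(\t) = g_i(\t) \star f_N(\t)$, which is computable in polynomial time from the two short GFs (Theorem~10.2 in \cite{BP}) and enumerates precisely $(R_i \oplus P_i) \cap F$ because that set is the intersection of $(R_i \oplus P_i) \cap \Z^{n_1}$ with $F \cap [-N,N]^{n_1}$. If you replace your per-piece oracle calls by this Hadamard-product step (and round each copolytope with $\floor{\cdot}$ before invoking Barvinok, as in Definition~\ref{def:copoly}), the rest of your argument goes through and matches the paper's proof.
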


In other words, Theorem \ref{th:pres_GF} says that the full GF $f(\t)$ can be computed in polynomial time from the partial GF $f_{N}(\t)$ for a suitable $N$.

\begin{proof}
Let~$n = n_{1}$. By Theorem~\ref{th:pres}, we have a decomposition
\begin{equation*}
F = \bigsqcup_{j=1}^{r} \, R_{j} \cap \pat T_{j} \..
\end{equation*}
We proceed similarly to the proof of Theorem~\ref{th:main_2}.
Denote~$R_{j}$ and~$\pat T_{j}$ by~$R$ and~$\pat T$ respectively, for convenience.
We have the decomposition \eqref{eq:direct_decomp} for~$R$ and~$R \cap \pat T$,
which leads to \eqref{eq:periodize}.
Eventually, we can compute a short GF~$g(\t)$ for~$R \cap \pat T$ using \eqref{eq:periodize_GF} and \eqref{eq:sum_up}.
The only difference is that the GF~$h_{i}$ for each patterned polytope~$(R_{i} \oplus P_{i}) \cap F$, which was~$(R_{i} \oplus P_{i}) \cap \pat T$ in \eqref{eq:periodize_GF}, cannot be obtained from Theorem~\ref{th:BW}, since~$F$ is no longer the result of a single projection on a polyhedron.

Recall that each~$R_{i} \oplus P_{i}$ is a polytope, with facets of total length~$\polyin(\phi(F))$.
Therefore, the vertices of~$R_{i} \oplus P_{i}$ can be found in polynomial time given~$F$.
This holds for all~$1 \le i \le t_{j}$ and all~$1 \le j \le r$.
Thus, we can find a positive integer~$N = N(F)$, for which
\begin{equation*}
\log N = \polyin(\phi(F)) \quad \text{and} \quad R_{i} \oplus P_{i} \subseteq [-N,N]^{n} \quad \text{ for all } 1 \le i \le t_{j}.
\end{equation*}
Given the partial GF~$f_{N}(\t)$, the GF~$h_{i}(\t)$ for each~$(R_{i} \oplus P_{i}) \cap F$ can be computed as follows.

Barvinok's theorem \cite{B1} (see also Theorem~4.4 in \cite{BP}) allows us to compute in polynomial time a short GF
\[
g_{i}(\t) = \sum_{\x \in (R_{i} \oplus P_{i}) \cap \Z^{n}} \t^{\x}
\]
for each polytope~$R_{i} \oplus P_{i}$.
Theorem~10.2 in \cite{BP} allows us to compute in polynomial time a short GF for the intersection of two finite sets, given their short GFs as input.
Since $(R_{i} \oplus P_{i}) \cap F$ is the intersection of $(R_{i} \oplus P_{i}) \cap \Z^{n}$ and $F \cap [-N,N]^{n}$, we can compute
\[
h_{i}(\t) \, = \sum_{\x \in (R_{i} \oplus P_{i}) \cap F} \t^{\x} \, = \. \Biggl(\sum_{\x \in (R_{i} \oplus P_{i}) \cap \Z^{n}} \t^{\x}\Biggr) \star \Biggl(\sum_{\x \in F \cap [-N,N]^{n}} \t^{\x}\Biggr) \. = \, g_{i}(\t) \star f_{N}(\t).
\]
in time $\polyin(\mu(F,N))$.
Here~$\star$ is the \emph{Hadamard product} of two power series (see~\cite{BP}).
The short GF $f_{N}(\t)$ is obtained by a single call to the oracle in time $\mu(F,N)$.
This completes the proof.
\end{proof}

\begin{rem}\label{rem:NP-hard}
We emphasize that Theorem~\ref{th:pres_GF} does not directly compute the GF~$f(\t)$ in polynomial time, for a general~$F$.
It only claims that~$f(\t)$ can be computed in time $\polyin\bigl(\mu(F,N)\bigr)$ given the oracle.
In fact, computing~$f(\t)$ directly from~$F$ is an $\NP$-hard problem, even for~$F \in \Pr_{2,(1,1)}$.
This result is proved in \cite[Prop.~5.3.2]{Woods}, and is ultimately derived from a result by Sch\"{o}ning~\cite{S},
which says that deciding the truth of $\Pr$-sentences of the form $\ex x \for y \; \Phi(x,y)$
is an $\NP$-complete problem.
\end{rem}

%% \begin{cor}\label{cor:unbdd-pres}
%% Short Presburger formulas can be found in poly time.
%% \end{cor}

\bigskip

\section{The~$k$-feasibility problem} \label{s:k-feas}

We present an application of Theorem~\ref{th:pres_GF}.
Let~$n,d$ and~$k$ be fixed integers and~$A \in \Z^{d \times n}$.
In \cite{ADL}, the authors defined a set~$\sg_{\ge k}(A) \in \Z^{d}$ of \emph{$k$-feasible} vectors as
\begin{equation}\label{eq:sgk}
\sg_{\ge k}(A) = \{\y  \in \Z^{d} \. : \. \ex \.  \x_{1}, \dots,  \x_{k} \in \N^{n}, \; \y = A  \x_{j}, \;  \x_{i} \neq  \x_{j} \text{ if } i \neq j ,\; 1 \le i,j \le k \}.
\end{equation}
In other words,~$\sg_{\ge k}(A)$ consists of vectors that are representable in at least~$k$ different ways as a non-negative combination of columns of~$A$.
In addition to some results about~$\sg_{\ge k}(A)$, the authors also gave an algorithm to compute a short GF for~$\sg_{\ge k}(A)$ within a finite box:

\begin{theo}[Theorem~5 in \cite{ADL}]\label{th:ADL}
Fix~$n,d$ and~$k$.
Let~$A \in \Z^{d \times n}$, and let~$N$ be a positive integer.
Let
\begin{equation*}
f_{N}(\t) = \sum_{\x \. \in \. \sg_{\ge k}(A) \cap [-N,N]^{d}} \t^{\x}
\end{equation*}
be the partial GF for $\sg_{\ge k}(A)$ within the box~$[-N,N]^{d}$.
Then there is a polynomial time algorithm to compute $f_{N}(\t)$ as a short GF.
\end{theo}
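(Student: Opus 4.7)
The plan is to express $\sg_{\ge k}(A) \cap [-N,N]^d$ as the image, under coordinate projection, of the integer points in finitely many bounded rational polytopes of polynomial size, and then apply the Barvinok--Woods algorithm (Theorem~\ref{th:BW}) to each piece, combining the outputs via inclusion--exclusion using the Hadamard product tool cited in the proof of Theorem~\ref{th:pres_GF}.

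First, I would encode membership in $\sg_{\ge k}(A)$ by an existential statement: $\y \in \sg_{\ge k}(A)$ iff there exist $\x_1, \dots, \x_k \in \N^n$, pairwise distinct, with $A\x_i = \y$ for each $i$. To make pairwise distinctness polyhedral, I enforce the strict lex chain $\x_1 \prev \x_2 \prev \cdots \prev \x_k$; this restricts the set of ordered tuples without altering the projection onto $\y$. Each relation $\x_s \prev \x_{s+1}$ decomposes, according to the first coordinate $j_s \in \{1,\dots,n\}$ where $\x_s$ and $\x_{s+1}$ differ, into $n$ disjoint polyhedral regions, so the full chain decomposes into $n^{k-1}$ disjoint polyhedral patterns $\pi = (j_1, \dots, j_{k-1})$.

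Next I bound the $\x_i$'s. I would pick an integer $M$ with $\log M = \polyin(\phi(A) + \log N)$ such that every $\y \in \sg_{\ge k}(A) \cap [-N,N]^d$ already admits $k$ distinct non-negative integer preimages in $[0,M]^n$. Standard short-solution bounds in integer linear programming give a particular solution of $A\x = \y,\, \x \ge 0$ of polynomially bounded bit length, and the recession cone $\ker(A) \cap \R_{\ge 0}^n$ has a Hilbert basis of polynomially bounded bit length; when this cone is non-trivial, small non-negative integer combinations of its generators produce arbitrarily many distinct shifts of any particular solution, so $k$ short preimages exist, while if the cone is trivial then the fiber is already finite and contained in a short box. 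For each pattern $\pi$, I then define
\[
Q_\pi \,=\, \bigl\{(\y, \x_1, \dots, \x_k) \in \R^{d+nk} : -N \le \y \le N,\; 0 \le \x_i \le M,\; A\x_i = \y \text{ for each } i,\; \text{lex pattern } \pi \text{ holds}\bigr\},
\]
a bounded rational polytope of input length $\polyin(\phi(A) + \log N)$, and let $T : \R^{d+nk} \to \R^d$ denote projection onto $\y$. By construction, $\bigcup_\pi T(Q_\pi \cap \Z^{d+nk}) = \sg_{\ge k}(A) \cap [-N,N]^d$.

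For each of the $n^{k-1}$ patterns $\pi$ (constantly many, since $n$ and $k$ are fixed), Theorem~\ref{th:BW} produces a short GF $g_\pi(\t)$ for $T(Q_\pi \cap \Z^{d+nk})$ in polynomial time. Since a single $\y$ may be produced by lex-sorted $k$-tuples of preimages lying in several different $Q_\pi$'s, these image sets in general overlap. To assemble $f_N(\t)$ I would apply inclusion--exclusion to the finitely many sets $T(Q_\pi \cap \Z^{d+nk})$; each intersection term is a Hadamard product of the $g_\pi$'s, computable in polynomial time by Theorem~10.2 of~\cite{BP}, the same tool invoked in the proof of Theorem~\ref{th:pres_GF}. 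The main obstacle is the preimage-bounding step: verifying carefully that $M$ of polynomial bit length suffices to retain $k$ distinct preimages for every $\y$ in the truncated set. Once that is in hand, the remaining work reduces to direct applications of Barvinok--Woods and the Hadamard-product calculus.
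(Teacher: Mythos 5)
This statement is not proved in the paper at all: Theorem~\ref{th:ADL} is imported verbatim from \cite{ADL} (Theorem~5 there) and used as a black box in the proof of Theorem~\ref{th:ADL_unbounded}, so there is no internal proof to compare yours against; I can only judge your reconstruction on its own merits, and it is essentially correct. The three load-bearing points all check out. First, replacing pairwise distinctness by a strict lex chain $\x_1 \prec \cdots \prec \x_k$ does not change the projection onto $\y$ (any $k$ distinct preimages can be lex-sorted), and splitting each strict comparison according to the first differing coordinate yields $n^{k-1}$ disjoint polyhedral patterns, a constant since $n,k$ are fixed. Second, the truncation bound $M$ with $\log M = \polyin(\phi(A)+\log N)$ does exist, by exactly the dichotomy you sketch: if $\ker(A)\cap\R_{+}^{n}=\{0\}$, every fiber $\{\x\ge 0 : A\x=\y\}$ with $\y\in[-N,N]^{d}$ is a polytope whose points have coordinates bounded by $2^{\polyin(\phi(A)+\log N)}$, so all (hence at least $k$) integer preimages already lie in the box; if the cone is nontrivial, a short particular solution (Cor.~17.1b of \cite{Schrijver}, the same bound this paper uses in the proof of Theorem~\ref{th:main_2}) plus $0,1,\dots,k-1$ times one short nonzero integer vector of the cone gives $k$ distinct short preimages, and a single $M$ covering both cases is computable from $A$ and $N$. (A full Hilbert basis is overkill here; one short ray suffices.) Third, each $Q_\pi$ lies in the fixed dimension $d+nk$ and has description length $\polyin(\phi(A)+\log N)$, so Theorem~\ref{th:BW} applies, and since the images $T(Q_\pi\cap\Z^{d+nk})$ may overlap, your inclusion--exclusion over the constantly many patterns, with each intersection realized as a Hadamard product (Theorem~10.2 of \cite{BP}, the same tool used in Theorem~\ref{th:pres_GF}), costs only constantly many polynomial-time operations in fixed dimension and yields $f_N(\t)$ as a short GF. So the only genuine obstacle is the one you flagged, and the two-case argument closes it; whether this matches the route taken in \cite{ADL} cannot be determined from this paper, which only cites the result.
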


Using Theorem~\ref{th:pres_GF}, we can extend Theorem~\ref{th:ADL} as follows:

\begin{theo}\label{th:ADL_unbounded}
Fix~$n,d$ and~$k$.
Then there is a polynomial time algorithm to compute
\[
f(\t) = \sum_{\x \. \in \. \sg_{\ge k}(A)} \t^{\x}
\]
for the entire set~$\sg_{\ge k}(A)$, as a short GF.
\end{theo}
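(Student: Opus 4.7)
The plan is to compose Theorem~\ref{th:pres_GF} with Theorem~\ref{th:ADL}. The defining description~\eqref{eq:sgk} already exhibits $\sg_{\ge k}(A)$ as the image, after projection onto $\y$, of a formula built from the linear equations $\y = A\x_j$, the non-negativity constraints $\x_j \ge 0$, and the pairwise distinctness constraints $\x_i \neq \x_j$. Hence I first rewrite~\eqref{eq:sgk} as an explicit $\Pr$-formula $F \in \Pr_{2,(d,\,kn)}$ of the form
\[
F \; = \; \bigl\{\y \in \Z^{d} : \ex\, (\x_{1},\dots,\x_{k}) \in \Z^{kn} \; \Phi(\y,\x_{1},\dots,\x_{k})\bigr\},
\]
where $\Phi$ is a Boolean combination of linear inequalities encoding $\x_j \ge 0$, $\y = A\x_j$, and each $\x_i \neq \x_j$ as a disjunction $\bigvee_{t=1}^{n} (x_{i,t} < x_{j,t}) \lor (x_{i,t} > x_{j,t})$. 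Because $n,d,k$ are fixed, this rewriting gives $\phi(F) = \polyin(\phi(A))$, and the quantifier structure has just $k'=2$ blocks with fixed dimension vector $\n = (d,\,kn)$, as required by Theorem~\ref{th:pres_GF}.

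Next, I apply Theorem~\ref{th:pres_GF} to $F$: in time $\polyin(\phi(F)) = \polyin(\phi(A))$, this produces an integer $N = N(F)$ with $\log N = \polyin(\phi(A))$ and reduces the computation of
\[
f(\t) \, = \sum_{\y \,\in\, \sg_{\ge k}(A)} \t^{\y}
\]
to a \emph{single} evaluation of the partial GF
\[
f_{N}(\t) \, = \sum_{\y \,\in\, \sg_{\ge k}(A) \,\cap\, [-N,N]^{d}} \t^{\y}
\]
followed by a post-processing stage running in time $\polyin(\mu(F,N))$.

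The oracle required by Theorem~\ref{th:pres_GF} is then supplied directly by Theorem~\ref{th:ADL}, which on input $(A,N)$ computes $f_{N}(\t)$ as a short GF in time polynomial in $\phi(A) + \log N$. Since $\log N = \polyin(\phi(A))$, this oracle call takes time $\polyin(\phi(A))$, so $\mu(F,N) = \polyin(\phi(A))$. Substituting $f_N(\t)$ into the reconstruction from Theorem~\ref{th:pres_GF} produces the desired short GF $f(\t)$ for the entire set $\sg_{\ge k}(A)$ in overall time $\polyin(\phi(A))$, completing the proof.

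The argument has no substantive mathematical obstacle beyond the two theorems already invoked; the only point that needs a careful (but routine) check is the first step, verifying that~\eqref{eq:sgk} really does fit into the fixed class $\Pr_{k',\n}$ with $\phi(F) = \polyin(\phi(A))$. In particular one must confirm that the $\binom{k}{2}$ distinctness clauses $\x_i \neq \x_j$, when expanded into $\Pr$-syntax, remain inside a single existential block and do not blow up the length beyond polynomial. Once this is in place, Theorem~\ref{th:pres_GF} supplies the lift from the bounded to the unbounded case and Theorem~\ref{th:ADL} supplies the bounded oracle, and composing them is immediate.
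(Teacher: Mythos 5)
Your proposal is correct and follows essentially the same route as the paper: write $\sg_{\ge k}(A)$ as a purely existential $\Pr$-formula of fixed quantifier structure and polynomial length, apply Theorem~\ref{th:pres_GF} to reduce the unbounded GF to one partial GF $f_N(\t)$ with $\log N = \polyin(\phi(A))$, and supply that oracle via Theorem~\ref{th:ADL}. The only (immaterial) difference is that you merge the existential variables into a single block, so $F \in \Pr_{2,(d,kn)}$, whereas the paper keeps $k$ blocks and places $\sg_{\ge k}(A) \in \Pr_{k+1,(d,n,\dots,n)}$.
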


\begin{proof}
From the definition \eqref{eq:sgk}, we see that~$\sg_{\ge k}(A)$ is a $\Pr$-formula in variables~$\y, \x_{1},\dots,\x_{k}$ with only an existential ($\ex$) quantifier.
Indeed, each condition~$\y = A\x_{j}$ is a system of~$2d$ inequalities.
Each condition~$\x_{i} \neq \x_{j}$ is a disjunction of~$2n$ inequalities~$(x_{it} < x_{jt})$ or~$(x_{it} > x_{jt})$ for~$1 \le t \le n$.
Therefore, we have~$\sg_{\ge k}(A) \in \Pr_{k+1,\n}$, where~$\n = (d,n,\dots,n)$.

Applying Theorem~\ref{th:pres_GF}, we can compute in polynomial time a short GF~$f(\t)$ for~$\sg_{\ge k}(A)$ given the partial short GF~$f_{N}(\t)$.
Finally, Theorem~\ref{th:ADL} allows us to compute~$f_{N}(\t)$ in polynomial time.
\end{proof}

Theorem~\ref{th:ADL} was stated in \cite{ADL} for fixed~$n$ and~$k$, but arbitrary~$d$.
The following result is a straightforward consequence of the previous theorem and
an argument by P.~van~Emde~Boas described in~\cite[$\S$4]{L}.

\begin{theo}\label{th:ADL_general}
Fix $n$ and $k$, but let $d$ be arbitrary.
Then there is a polynomial time algorithm to compute
\[
f(\t) = \sum_{\x \. \in \. \sg_{\ge k}(A)} \t^{\x}
\]
for the entire set~$\sg_{\ge k}(A)$, as a short GF.
\end{theo}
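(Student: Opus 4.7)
The plan is to reduce Theorem~\ref{th:ADL_general} to the already-established fixed-$d$ version, Theorem~\ref{th:ADL_unbounded}. The key observation is that although $\sg_{\ge k}(A)$ lives in $\Z^d$ with $d$ unbounded, every one of its elements lies in the sublattice $\Lambda \coloneqq A\Z^n \subseteq \Z^d$, whose rank $n' \le n$ is bounded by the fixed constant $n$. Thus we can compress the problem into $\Z^{n'}$, apply Theorem~\ref{th:ADL_unbounded} there, and then transport the resulting short GF back to $\Z^d$ by a single monomial substitution. This is precisely the flavor of dimension-reduction trick attributed to van~Emde~Boas.

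First, I would apply Hermite Normal Form (in the polynomial-time form of Kannan--Bachem~\cite{KB}) to the columns of $A$ to produce, in time $\polyin(\phi(A))$, an integer matrix $B \in \Z^{d \times n'}$ whose columns form a basis of $\Lambda$. Since each column of $A$ lies in $\Lambda = B\Z^{n'}$, we obtain a factorization $A = BA'$ with $A' \in \Z^{n' \times n}$ of polynomial bit length. Next I would verify the bijection $\sg_{\ge k}(A) = B\bigl(\sg_{\ge k}(A')\bigr)$: the inclusion $\supseteq$ is immediate from $A = BA'$, while $\subseteq$ uses injectivity of $B$, since $\y = A\x_i = B(A'\x_i)$ forces $A'\x_i$ to equal the unique $\z \in \Z^{n'}$ with $B\z = \y$, giving $\z \in \sg_{\ge k}(A')$.

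Now $n'$, $n$ and $k$ are all bounded by the fixed constants $n$ and $k$, so Theorem~\ref{th:ADL_unbounded} applies to $A'$ and yields in polynomial time a short GF
\[
g(\s) \, = \, \sum_{\z \. \in \. \sg_{\ge k}(A')} \s^\z
\]
in variables $\s = (s_1,\ldots,s_{n'})$. To recover $f(\t)$, I would perform the monomial substitution $s_i \mapsto \t^{B_i}$, where $B_i$ is the $i$-th column of $B$, producing
\[
f(\t) \, = \, g\bigl(\t^{B_1}, \ldots, \t^{B_{n'}}\bigr).
\]
Because $B$ is injective on $\Z^{n'}$, each exponent $B \beta_{ij}$ arising in the denominators remains nonzero, so no factor $1 - \t^{B\beta_{ij}}$ becomes trivial, and the resulting expression is a bona fide short GF in $\t$. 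By the bijection above, term by term it equals $\sum_{\y \in \sg_{\ge k}(A)} \t^\y$, as desired.

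The only delicate bookkeeping — and the likeliest place for a subtle slip — is to verify that $B$ and $A'$ have polynomial bit length even when $d$ is arbitrary (which follows from the standard polynomial bounds on HNF) and that the monomial substitution inflates the length of the short GF only by a factor of $\polyin(\phi(B))$. Both are routine; beyond the lattice-compression step, no new ideas are required.
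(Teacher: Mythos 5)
Your proposal is correct and follows essentially the same route as the paper: both compress the problem into the rank-$\le n$ lattice generated by the columns of $A$ (the paper via a unimodular $U \in \Z^{d\times d}$ making $UA$ supported on the first $n$ rows, you via an HNF basis $B$ and the factorization $A = BA'$), apply Theorem~\ref{th:ADL_unbounded} in the compressed coordinates, and recover $f(\t)$ by a monomial/variable substitution. Your write-up merely makes explicit the injectivity, nonvanishing-denominator, and bit-length checks that the paper's proof leaves implicit.
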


\begin{proof}
This can be easily reduced to the case when~$d$ is also fixed.
Indeed, let~$\L_{A} \subseteq \Z^{d}$ be the lattice generated by the~$n$ columns of~$A \in \Z^{d \times n}$.
We have~$\rank(\L_{A}) = \rank(A) \le n$.
Hence, we can find a~$d \times d$ unimodular matrix~$U$ so that~$UA$ is non-zero only in the first~$n$ rows.
Let~$B \in \Z^{n \times n}$ be the first~$n$ rows of~$UA$, and~$\L_{B}$ be the lattice generated by the columns of~$B$.
Observe that~$\L_{B}$ and~$\L_{A}$ are isomorphic.
Therefore, the set of~$k$-representable vectors in~$\L_{A}$ are in bijection with those in~$\L_{B}$.
Now we apply Theorem~\ref{th:ADL_unbounded} to get a short GF $g(\t)$ for $\sg_{\ge k}(B)$.
The GF for $\sg_{\ge k}(A)$ is easily obtained from $g(\t)$ by a variable substitution via $U^{-1}$.
\end{proof}

\bigskip

%
% \section{Conclusion}\label{sec:Conclusion}
%
% We extend Barvinok--Woods algorithm to compute short GFs for
% projections of polyhedra.  The result fills a gap in the literature
% on parametric integer programming which remained open since~2003.
% We also prove a structural result on the projection of semilinear sets.

\bigskip

\section{Conclusion and Final Remarks}\label{sec:Conclusion}

\subsection{}
We extend the Barvinok--Woods algorithm to compute short GFs for
projections of polyhedra.  The result fills a gap in the literature
on parametric integer programming which remained open since~2003.
We also prove a structural result on the projection of semilinear sets
by a direct argument. Let us emphasize that we get effective polynomial
bounds for the number of polyhedral pieces and the
facet complexity of each piece in the projection,
but not on the complexity of the pattern within each piece.

\subsection{}
The study of semilinear sets has numerous applications in computer science,
such as analysis of \emph{number decision diagrams} (see \cite{Leroux1}),
and \emph{context-free languages} (see \cite{Parikh}).
We refer to~\cite{Ginsburg} for background on semilinear sets with their connections to
Presburger Arithmetic, and to~\cite{CH} for most recent developments.
Let us also mention
that in the papers~\cite{NP1,NP2}, we analyze the semilinear structure of sets defined by \emph{short Presburger formulas}, which are $\Pr$-formulas with a bounded number of variables and inequalities.
%% Assuming a technical result known as \emph{Kannan Partition Theorem}, 
%% we show that these sets have short GFs of polynomial lengths.
%% give a far-reaching generalization
%% of results by Lenstra, Kannan, Barvinok and Barvinok--Woods to general
%% Presburger Arithmetic formulas.

\subsection{}\label{ss:convergence2}
%In general, for any polyhedron $Q \subseteq \rr^{m}$ and any integer linear transformation
Without the extra condition $T(Q) \subseteq \rr_{+}^{n}$ in Theorem~\ref{th:main_2} we can still treat the GF of $T(Q \cap \zz^{m})$
%%then we generating function of $\y$
as formal power series.
%% which is also representable as a rational function $g(\t)$.
In some cases, this power series might not converge under numerical substitutions.
For example, if $Q = \R^{m}$ and $T$ projects $\zz^{m}$ onto $\zz$, then every $y \in \Z$ lies in $T(Q \cap \zz^{m})$.
In this case, we have
\begin{equation*}
\sum_{y \ts\in\ts T(Q \cap \zz^{m})} t^{y} \;=\; \ldots + t^{-2} + t^{-1} + 1 + t + t^{2} + \ldots,
\end{equation*}
which is not convergent for any non-zero $t$.
However, when $T(Q)$ has a pointed characteristic cone, for example $T(Q) \subseteq \rr_{+}^{n}$, then the power series converges on a non-empty open domain.
For any $\t$ in that domain, the power series converges to the computed rational function $g(\t)$.
%% Throughout the paper we sidestep the issue of convergence of infinite GFs by working
%% with formal power series.
%% For any polyhedron with a pointed recession cone, e.g. polyhedra in $\R_{+}^{n}$,
%% the Laurent series of its integer points converges on some open domain to a rational function.
For the general case when $T(Q)$ could possibly contain infinite lines,
we can resort to the theory of valuations (see~\cite{B3,BP}) to make sense of the GF.
Alternatively, one can always decompose any such $Q$ into a finite union of at most $n+1$ polyhedra $Q_{i}$, each of which projects within a pointed cone in $\R^{n}$.
Then the GF for the projection of $Q \cap \Z^{m}$ can be thought of as a formal sum of at most $n+1$ short GFs, each with its own domain of convergence and a rational representation $g_{i}(\t)$.
%% In practice, the number of pieces needed could be far fewer than $2^{n}$, although it does not matter for the complexity.

\subsection{}
Our generalization of the Barvinok--Woods theorem also simplifies many existing proofs in the literature when one needs to compute a short generating function for unbounded sets.
See for example the computation of Hilbert series in \cite[Sec.\ 7.3]{BW} and the computation of optimal points for integer programming in~\cite[Lem.\ 3.3]{HS}.

\subsection{}
Finally, we refer to~\cite{RA} for an extensive introduction to the
Frobenius problem.  This problem was the first application of Kannan's pioneering result in~\cite{K2} on lattice covering radius, an application first suggested by
Lov\'{a}sz~\cite{Lov}.

% Note that direct quantifier elimination for Presburger arithmetic
% only gives a weak exponential upper bound on the number of
% of polyhedral pieces in the projection.

\medskip

\subsection*{Acknowledgements}
We are greatly indebted to Sasha Barvinok and Sinai Robins
for introducing us to the subject. We are also thankful to
Iskander Aliev, Matthias Aschenbrenner, Art\"{e}m Chernikov,
Jes\'{u}s De Loera, Lenny Fukshansky, Oleg Karpenkov and Kevin Woods for interesting
conversations and helpful remarks.
The second author was partially supported by the~NSF.

\bigskip

\bigskip

\medskip

%\newpage

{\footnotesize

}

\end{document}